\def\ig#1#2#3#4{\begin{figure}[!ht]\begin{center}%
\includegraphics[height=#2\textheight]{#1.eps}\caption{#4}\label{#3}%
\end{center}\end{figure}}
\def\thtext#1{
  \catcode`@=11
  \gdef\@thmcountersep{. #1}
  \catcode`@=12
}
\def\threst{
  \catcode`@=11
  \gdef\@thmcountersep{.}
  \catcode`@=12
}
\theoremstyle{plain}
\newtheorem{thm}{Theorem}[section]
\newtheorem{prop}[thm]{Proposition}
\newtheorem{cor}[thm]{Corollary}
\theoremstyle{definition}
\newtheorem{examp}[thm]{Example}
\newtheorem{rk}[thm]{Remark}
 \def\.{.\spacefactor\@m}
\def\R{\mathbb R}
\def\a{\alpha}
\def\b{\beta}
\def\e{\varepsilon}
\def\D{\Delta}
\def\l{\lambda}
\def\s{\sigma}
\def\0{\emptyset}
\def\:{\colon}
\def\<{\langle}
\def\>{\rangle}
\def\rom#1{\emph{#1}}
\def\({\rom(}
\def\){\rom)}
\def\sm{\setminus}
\def\ss{\subset}
\def\x{\times}
\def\diam{\operatorname{diam}}
\def\dis{\operatorname{dis}}
\def\cD{{\cal D}}
\def\cM{{\cal M}}
\def\cP{{\cal P}}
\def\cR{{\cal R}}
\begin{document}
\title{Gromov--Hausdorff Distance to Simplexes}
\author{D.\,S.~Grigor'ev, A.\,O.~Ivanov, A.\,A.~Tuzhilin}
\date{}
\maketitle

\begin{abstract} 
Geometric characteristics of metric spaces that appear in formulas of Gromov--Hausdorff distances from these spaces to so-called simplexes, i.e., to the metric spaces, all whose non-zero distances are the same are studied. The corresponding calculations essentially use geometry of partitions of these spaces. In the finite case, it gives the lengths of minimal spanning trees~\cite{TuzMST-GH}. In~\cite{IvaTuzSimpDist} a similar theory for compact metric spaces is worked out. Here we generalize the results from~\cite{IvaTuzSimpDist} to any bounded metric space, also, we simplify some proofs from~\cite{IvaTuzSimpDist}.
\end{abstract}

\section*{Introduction}
\markright{\thesection.~Introduction}
The ``space of spaces'' and ``space of subsets'' appear often in applications such as images comparison and recognition, besides, these spaces are important in various pure theoretical speculations, and so, they attract attention of various specialists for many years. One of the natural approaches to investigation of such spaces consists in introducing a distance function which measures the ``difference'' between the objects in consideration. In 1914 F.~Hausdorff~\cite{Hausdorff} defined a non-negative symmetric function on pairs of non-empty subsets of a metric space $X$: it equals the infimum of positive $r$ such that each subset from the pair belongs to the $r$-neighborhood of the other one. This function turns the family of all closed bounded subsets of $X$ into a metric space. Later D.~Edwards~\cite{Edwards} and, independently, M.~Gromov~\cite{Gromov} generalized this Hausdorff construction to the class of all metric spaces, in terms of isometric embeddings into another ``ambient'' metric spaces, see below. Now this function is called the Gromov--Hausdorff distance. Notice that the distance is symmetric and satisfies triangle inequality, however, it can be equal to infinity, and also can vanish for non-isometric metric spaces. Nevertheless, its restriction to the set $\cM$ of all compact metric spaces considered up to an isometry, satisfies all axioms of metric. The set $\cM$ endowed with the Gromov--Hausdorff distance is called the Gromov--Hausdorff space. The geometry of this metric space turns out to be rather exotic and is actively studied recently. It is well--known that  $\cM$ is linear connected, complete, separable, geodesic space, and that $\cM$ is not proper. A detailed introduction to geometry of the Gromov--Hausdorff space can be found in~\cite{BurBurIva}. In~\cite{INT} it was proved that $\cM$ is geodesic.

Actually, it is rather difficult to calculate the Gromov--Hausdorff distance between two given spaces. Even in the case of finite metric spaces there is no an effective algorithm, and brute forth enumeration based on the correspondences technic poorly suited even for spaces consisting of several dozen points. However, this technic, actively developing in recent times, allows to get various nontrivial theoretical results, see for example~\cite{INT}, \cite{ITAutVesnik}, or~\cite{Memoli}.

In the present paper we deal with calculation and estimation of the Gromov--Hausdorff distances from an arbitrary bounded metric space to so-called simplexes, namely, to the metric spaces, all whose non-zero distances are the same. In the case of finite spaces and simplexes, these calculations lead to a new interpretation of minimal spanning tree edges' lengths~\cite{TuzMST-GH}. In addition, these distances play an important role in investigation of the symmetry group of the space $\cM$, see~\cite{ITAutVesnik}. In~\cite{IvaTuzSimpDist} we have calculated and estimated the distances between finite simplexes and compact spaces. In particular, it gives us a possibility to construct two non-isometric finite metric spaces with equal distances to all finite simplexes.

In the present work we do not restrict ourselves neither with finite simplexes, nor with compact spaces. We define a few additional characteristics of bounded metric spaces and use them for exact formulas or exact estimations of the Gromov--Hausdorff distances between the spaces and simplexes in such general situation.

The work is partly supported by RFBR (Project~19-01-00775-a) and by President Program of Leading Scientific Schools Support (Project~NSh--6399.2018.1, Agreement No.~075--02--2018--867)

\section{Preliminaries}
\markright{\thesection.~Preliminaries}
Let $X$ be an arbitrary set.  By $\#X$ we denote the \emph{cardinality\/} of the set $X$.

Let $X$ be an arbitrary metric space. The distance between its points $x$ and $y$ we denote by $|xy|$. If $A,B\ss X$ are non-empty subsets of $X$, then we put $|AB|=\inf\bigl\{|ab|:a\in A,\,b\in B\bigr\}$. For $A=\{a\}$, we write $|aB|=|Ba|$ instead of $|\{a\}B|=|B\{a\}|$.

For each  point $x\in X$ and a number $r>0$, by $U_r(x)$ we denote the open ball with center $x$ and radius $r$; for any non-empty $A\ss X$ and a number $r>0$, we put $U_r(A)=\cup_{a\in A}U_r(a)$.

\subsection{Hausdorff and Gromov--Hausdorff Distances}
For non-empty $A,\,B\ss X$ put
$$
d_H(A,B)=\inf\bigl\{r>0:A\ss U_r(B),\ \text{and}\ B\ss U_r(A)\bigr\}=\max\{\sup_{a\in A}|aB|,\ \sup_{b\in B}|Ab|\}.
$$
This value is called the \emph{Hausdorff distance between $A$ and $B$}. It is well-known~\cite{BurBurIva} that the Hausdorff distance restricted to the set of all non-empty bounded closed subsets of $X$ is a metric.

Let $X$ and $Y$ be metric spaces. A triple $(X',Y',Z)$ consisting of a metric spaces $Z$, together with its subsets $X'$ and $Y'$ isometric to $X$ and $Y$, respectively, is called a \emph{realization of the pair $(X,Y)$}. The \emph{Gromov--Hausdorff distance $d_{GH}(X,Y)$ between $X$ and $Y$} is the infimum of real numbers $r$ such that there exists a realization  $(X',Y',Z)$ of the pair $(X,Y)$ with $d_H(X',Y')\le r$. It is well-known~\cite{BurBurIva} that $d_{GH}$ restricted to the set $\cM$ of all compact metric spaces considered up to an isometry, is a metric.

The following technic of correspondences is useful for calculation of the Gromov--Hausdorff distance.

Let $X$ and $Y$ be arbitrary non-empty sets. Recall that a \emph{relation\/} between the sets $X$ and $Y$ is a subset of the Cartesian product $X\x Y$.  By $\cP(X,Y)$ we denote the set of all \textbf{non-empty\/} relations between $X$ and $Y$. It is useful to consider each relation $\s\in\cP(X,Y)$ as a multivalued mapping, whose domain can be less than $X$. Then, similarly with the case of mappings, for each $x\in X$ and $A\ss X$, one can define their images
$$
\s(x)=\big\{y\in Y: (x,y)\in \s\big\}, \quad \text{and}\quad \s(A)=\bigcup_{a\in A}\s(a),
$$
and for any $y\in Y$ and $B\ss Y$, their preimages
$$
\s^{-1}(y)=\big\{x\in X: (x,y)\in R\big\}, \quad \text{and}\quad \s^{-1}(B)=\bigcup_{b\in B}\s^{-1}(b)
$$
are defined.

A relation $R\in\cP(X,Y)$ is called a \emph{correspondence} if $R(X)=Y$ and $R^{-1}(Y)=X$. The set of all correspondences between $X$ and $Y$ we denote by $\cR(X,Y)$.

Let $X$ and $Y$ be arbitrary metric spaces. The \emph{distortion $\dis\s$ of a relation $\s\in\cP(X,Y)$} is the value
$$
\dis\s=\sup\Bigl\{\bigl||xx'|-|yy'|\bigr|: (x,y),(x',y')\in\s\Bigr\}.
$$
It is easy to see that for any relations $\s_1,\s_2\in\cP(X,Y)$ such that $\s_1\ss\s_2$ it holds $\dis\s_1\le\dis\s_2$. In other words, the mapping $\dis\:\cP(X,Y)\to\R$ is monotone with respect to the partial ordering on $\cP(X,Y)$ generated by inclusion.

\begin{prop}[\cite{BurBurIva}]
For any metric spaces $X$ and $Y$ we have
$$
d_{GH}(X,Y)=\frac12\inf\bigl\{\dis R:R\in\cR(X,Y)\bigr\}.
$$
\end{prop}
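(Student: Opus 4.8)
The plan is to establish the two inequalities separately. For the bound $d_{GH}(X,Y)\le\frac12\inf\bigl\{\dis R:R\in\cR(X,Y)\bigr\}$, I would fix a correspondence $R\in\cR(X,Y)$, set $r=\frac12\dis R$, and construct a realization of $(X,Y)$ on the disjoint union $Z=X\sqcup Y$. On $X$ and on $Y$ the distance function of $Z$ is taken to be the original one, while for $x\in X$ and $y\in Y$ I would put
$$
|xy|_Z=\inf\bigl\{|xx'|+r+|y'y|:(x',y')\in R\bigr\}.
$$
Then one checks that $|\cdot\,\cdot|_Z$ is a pseudometric on $Z$: symmetry, non-negativity and the degenerate cases are immediate, and among the triangle inequalities the only substantial one is $|xx_1|\le|xy|_Z+|x_1y|_Z$ for $x,x_1\in X$, $y\in Y$ (the case of two points of $Y$ being symmetric, and the remaining mixed inequalities following from monotonicity of the infimum). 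To verify it, expand the two infima, apply the triangle inequality in $Y$ and the bound $|y'y''|\ge|x'x''|-\dis R$ for $(x',y'),(x'',y'')\in R$ — which is just the definition of $\dis R$ — and use $2r=\dis R$: the detour terms cancel and the inequality reduces to the triangle inequality in $X$. Since the restrictions of $|\cdot\,\cdot|_Z$ to $X$ and to $Y$ are the original genuine metrics, passing to the metric quotient identifies no two points of $X$ and no two points of $Y$, so $X$ and $Y$ embed isometrically into the resulting metric space. Finally, because $R$ is a correspondence, every $x\in X$ admits $y\in Y$ with $(x,y)\in R$, and then $|xy|_Z\le r$ (take $x'=x$, $y'=y$ in the infimum); symmetrically for points of $Y$. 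Hence the Hausdorff distance between the copies of $X$ and $Y$ in $Z$ is at most $r$, so $d_{GH}(X,Y)\le r=\frac12\dis R$, and taking the infimum over $R$ gives the first inequality.

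For the reverse inequality $\frac12\inf\bigl\{\dis R:R\in\cR(X,Y)\bigr\}\le d_{GH}(X,Y)$, I would take an arbitrary realization $(X',Y',Z)$ of $(X,Y)$ with $d_H(X',Y')<r$, and, identifying $X$ with $X'$ and $Y$ with $Y'$ via the given isometric embeddings, define
$$
R=\bigl\{(x,y)\in X\x Y:|x'y'|_Z<r\bigr\},
$$
where $x'\in X'$ and $y'\in Y'$ denote the points corresponding to $x$ and $y$. Since $d_H(X',Y')<r$, each point of $X'$ lies at distance less than $r$ from $Y'$ and vice versa, so $R$ is a correspondence. For $(x_1,y_1),(x_2,y_2)\in R$ the triangle inequality in $Z$ gives
$$
\bigl||x_1x_2|-|y_1y_2|\bigr|=\bigl||x_1'x_2'|_Z-|y_1'y_2'|_Z\bigr|\le|x_1'y_1'|_Z+|x_2'y_2'|_Z<2r,
$$
hence $\dis R\le 2r$, so $\inf\bigl\{\dis R:R\in\cR(X,Y)\bigr\}\le 2r$ for every $r>d_{GH}(X,Y)$. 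Letting $r$ decrease to $d_{GH}(X,Y)$ yields $\inf\bigl\{\dis R:R\in\cR(X,Y)\bigr\}\le 2\,d_{GH}(X,Y)$, which is the second inequality; combining the two completes the proof.

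I expect the only real obstacle to be the verification that extending the metric to $Z$ does not create a shortcut inside $X$ or inside $Y$, i.e. the triangle inequality $|xx_1|\le|xy|_Z+|x_1y|_Z$. Once the distortion estimate $\bigl||x'x''|-|y'y''|\bigr|\le\dis R$ is invoked, this is a one-line computation, but it is exactly the point where the choice $r=\frac12\dis R$ is forced and where the hypothesis that $R$ is a correspondence — not merely a relation — enters, the latter again being what secures the Hausdorff bound $d_H\le r$ in $Z$. Everything else in the argument is routine bookkeeping with infima and the triangle inequality.
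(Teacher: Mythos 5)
Your proof is correct and is essentially the standard argument from the cited reference (Burago--Burago--Ivanov), which the paper itself does not reproduce but simply quotes: the upper bound via the glued (pseudo)metric on $X\sqcup Y$ with gap $r=\frac12\dis R$, and the lower bound via the correspondence $R=\{(x,y):|x'y'|_Z<r\}$ extracted from a realization with $d_H<r$. No gaps; the only edge cases (infinite distortion, $\dis R=0$ forcing the metric quotient) are either trivial or handled as you indicate.
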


A correspondence from $\cR(X,Y)$, that is minimal by inclusion is called \emph{irreducible}. By $\cR^0(X,Y)$ we denote  the set of all irreducible correspondences between $X$ and $Y$.

Notice (see~\cite{IvaTuzIrreducible}) that each irreducible correspondence $R\in\cR^0(X,Y)$ generates partitions $D^R_X$ and $D^R_Y$ of the spaces $X$ and $Y$, respectively, together with a bijection $f_R\:D^R_X\to D^R_Y$, such that
\begin{equation}\label{eq:irreducible}
R=\bigcup_{X_i\in D^R_X}X_i\x f_R(X_i),
\end{equation}
and, in addition, if $\#X_i>1$ then $\#f_R(X_i)=1$, and if $\#f_R(X_i)>1$ then $\#X_i=1$. Moreover, each bijection $f$ between arbitrary partitions $D_X$ and $D_Y$ of the spaces $X$ and $Y$, satisfying these properties, generates an irreducible correspondence by means of Formula~(\ref{eq:irreducible}).

\begin{prop}[\cite{IvaTuzIrreducible}]
For each $R\in\cR(X,Y)$ there exists an irreducible correspondence $R_0$ such that $R_0\ss R$. In particular, $\cR^0(X,Y)\ne\0$.
\end{prop}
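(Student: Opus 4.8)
The plan is to translate the statement into graph theory and to build $R_0$ from a maximal matching. View $R$ as the edge set of a bipartite graph $G$ on the vertex set $X\sqcup Y$, an edge joining $x\in X$ to $y\in Y$ whenever $(x,y)\in R$; since $R$ is a correspondence, $G$ has no isolated vertices. A sub-relation $R_0\ss R$ is a correspondence exactly when it is an \emph{edge cover} of $G$ (every vertex is incident to some edge of $R_0$), and $R_0$ is irreducible exactly when this edge cover is minimal by inclusion. First I would record that minimality is here a \emph{local} condition: the relations strictly below $R_0$ are precisely those obtained by deleting one or more pairs, so $R_0$ is irreducible iff deleting any single $(x,y)\in R_0$ destroys the correspondence property, i.e.\ iff for every $(x,y)\in R_0$ one has $R_0(x)=\{y\}$ or $R_0^{-1}(y)=\{x\}$. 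One cannot simply apply Zorn's Lemma to $\{R'\in\cR(X,Y):R'\ss R\}$ ordered by reverse inclusion, because the intersection of a chain of correspondences need not be a correspondence: a decreasing chain of infinite fibres over a fixed point can have empty intersection.

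Next I would choose a maximal matching $M$ in $G$, that is, a relation $M\ss R$ maximal by inclusion among those with $\#M(x)\le1$ and $\#M^{-1}(y)\le1$ for all $x,y$. Such $M$ exists by Zorn's Lemma, since the matching property is preserved under unions of chains (it is a restriction on pairs of edges). Let $U$ be the set of vertices of $G$ not covered by $M$. By maximality, no edge of $R$ joins two vertices of $U$; hence for each $v\in U$ every edge of $R$ at $v$ (there is at least one) leads to an $M$-covered vertex. For each $v\in U$ fix such an edge $e_v\in R$ and let $f(v)$ be its other endpoint.

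Then I would set $M'=\{e\in M:\text{both endpoints of }e\text{ belong to }f(U)\}$ and define $R_0=(M\sm M')\cup\{e_v:v\in U\}$. Clearly $R_0\ss R$. It is an edge cover: a vertex $v\in U$ is covered by $e_v$; a vertex in an edge of $M\sm M'$ is covered by that edge; and a vertex $a$ in an edge of $M'$ has $a\in f(U)$, so $a=f(v)$ for some $v\in U$ and is covered by $e_v$. Hence $R_0$ is a correspondence. For minimality, let $(x,y)\in R_0$. If $(x,y)=e_v$, then the endpoint $v\in U$ lies in no other edge of $R_0$ (it lies in no edge of $M$, and $e_v$ is the only $e_{v'}$ through it), so $e_v$ is the unique cover of $v$. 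If $(x,y)\in M\sm M'$, then some endpoint $a$ of it is not in $f(U)$; since $M$ is a matching, $(x,y)$ is the only edge of $M$ through $a$, and no $e_v$ passes through $a$ (otherwise $a=f(v)\in f(U)$), so $(x,y)$ is the unique cover of $a$ in $R_0$. By the local criterion $R_0$ is irreducible, and since $\cR(X,Y)\ne\0$ the conclusion $\cR^0(X,Y)\ne\0$ follows.

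The one genuine difficulty is the failure of chain-completeness pointed out in the first paragraph, which blocks a direct Zorn argument; the maximal matching supplies a skeleton whose uncovered complement is an independent set, and the correction term $M'$ exists precisely to discard the skeleton edges that become redundant once the edges $e_v$ are added, without spoiling minimality anywhere.
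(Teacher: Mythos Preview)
The paper does not prove this proposition; it is quoted from~\cite{IvaTuzIrreducible} without argument, so there is no in-paper proof to compare against.

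Your proof is correct. The local criterion for irreducibility is valid because the family of correspondences contained in $R$ is upward closed: if some $R_1\subsetneq R_0$ were a correspondence, then for any $(x,y)\in R_0\sm R_1$ the set $R_0\sm\{(x,y)\}\supseteq R_1$ would still cover every vertex. Your remark that a direct Zorn argument on sub-correspondences fails (a decreasing chain of fibres over a fixed point can shrink to $\emptyset$) is apt, and passing instead to a maximal matching $M\ss R$---which \emph{does} exist by Zorn, the matching condition being a restriction on pairs of edges---is a clean way around it. The verification that $R_0=(M\sm M')\cup\{e_v:v\in U\}$ is an edge cover, and that every one of its edges has an endpoint of degree~$1$ in $R_0$, goes through exactly as you wrote; in the second minimality case the key point, which you use implicitly, is that the chosen endpoint $a\notin f(U)$ is $M$-saturated and hence $a\notin U$, so neither a second matching edge nor any $e_v$ can meet it. The resulting $R_0$ visibly has the ``union of stars'' block structure recorded in the paragraph preceding the proposition, which is a good consistency check.
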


Taking into account that the function $\dis$ is monotonic we get immediately the following result.

\begin{cor}\label{cor:GH-distance-irreducinle}
For any metric spaces $X$ and $Y$ we have
$$
d_{GH}(X,Y)=\frac12\inf\bigl\{\dis R:R\in\cR^0(X,Y)\bigr\}.
$$
\end{cor}

The correspondences can be used for simple proving the following well-known facts. For any metric space $X$ and a real number $\l>0$ by $\l X$ we denote  the metric space that differs from $X$ by multiplication of all its distances by $\l$.

\begin{prop}[\cite{BurBurIva}]\label{prop:GH_simple}
Let $X$ and $Y$ be metric spaces. Then
\begin{enumerate}
\item\label{prop:GH_simple:1} if $X$ is the single-point metric space, then $d_{GH}(X,Y)=\frac12\diam Y$\rom;
\item\label{prop:GH_simple:2} if $\diam X<\infty$, then
$$
d_{GH}(X,Y)\ge\frac12|\diam X-\diam Y|;
$$
\item\label{prop:GH_simple:3} $d_{GH}(X,Y)\le\frac12\max\{\diam X,\diam Y\}$, in particular,  $d_{GH}(X,Y)<\infty$ for bounded $X$ and $Y$\rom;
\item\label{prop:GH_simple:4} for any $X,Y\in\cM$ and any $\l>0$ we have $d_{GH}(\l X,\l Y)=\l d_{GH}(X,Y)$. Moreover, for $\l\ne1$, the unique space that remains the same under multiplication by $\l$ is the single-point space. In other words, the multiplication of metrics by $\l>0$ is a homothety of the space $\cM$ with the center at the single-point space.
\end{enumerate}
\end{prop}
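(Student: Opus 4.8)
The plan is to derive all four items directly from the identity $d_{GH}(X,Y)=\frac12\inf\{\dis R:R\in\cR(X,Y)\}$ by picking convenient correspondences and bounding their distortion. For~(\ref{prop:GH_simple:1}), if $X=\{x_0\}$ is a one-point space then the requirement $R(X)=Y$ forces $R=\{x_0\}\x Y$, so this is the unique correspondence between $X$ and $Y$, and its distortion equals $\sup\bigl\{\bigl||x_0x_0|-|yy'|\bigr|:y,y'\in Y\bigr\}=\sup\{|yy'|:y,y'\in Y\}=\diam Y$, whence $d_{GH}(X,Y)=\frac12\diam Y$. For~(\ref{prop:GH_simple:3}), take the ``full'' correspondence $R=X\x Y$: for any $(x,y),(x',y')\in R$ we have $0\le|xx'|\le\diam X$ and $0\le|yy'|\le\diam Y$, hence $\bigl||xx'|-|yy'|\bigr|\le\max\{\diam X,\diam Y\}$, so $\dis R\le\max\{\diam X,\diam Y\}$ and the stated upper bound follows; it is finite when both spaces are bounded.

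For~(\ref{prop:GH_simple:2}), fix an arbitrary $R\in\cR(X,Y)$. Since $R$ is a correspondence, every $x\in X$ lies in $R^{-1}(Y)$ and every $y\in Y$ lies in $R(X)$; consequently, as the pairs $\bigl((x,y),(x',y')\bigr)$ range over $R\x R$, the suprema of $|xx'|$ and of $|yy'|$ equal $\diam X$ and $\diam Y$ respectively. Applying the elementary inequality $\bigl|\sup_i a_i-\sup_i b_i\bigr|\le\sup_i|a_i-b_i|$ — admissible here because $\diam X<\infty$ excludes the indeterminate form $\infty-\infty$ — we obtain $|\diam X-\diam Y|\le\dis R$; taking the infimum over $R$ and halving yields the claim.

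For~(\ref{prop:GH_simple:4}), note that a subset of $X\x Y$ is a correspondence between $X$ and $Y$ precisely when it is a correspondence between $\l X$ and $\l Y$, so the two sets of correspondences coincide; moreover, since every distance in $\l X$ (resp.\ $\l Y$) is $\l$ times the corresponding distance in $X$ (resp.\ $Y$), we have $\dis_{\l X,\l Y}R=\l\,\dis_{X,Y}R$ for each such $R$. Passing to infima gives $d_{GH}(\l X,\l Y)=\l\,d_{GH}(X,Y)$. The single-point space is clearly invariant under rescaling; conversely, if $\l\ne1$ and $\l X$ is isometric to $X$, then, since isometries preserve the diameter and $\diam X<\infty$ ($X\in\cM$ being compact), the equality $\l\diam X=\diam X$ forces $\diam X=0$, i.e.\ $X$ is a point. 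This establishes the homothety statement.

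None of these steps is genuinely difficult; the only points requiring attention are the $\sup$-versus-$\sup$ estimate in~(\ref{prop:GH_simple:2}) — and the reason $\diam X<\infty$ is assumed there — together with the use of diameter-invariance under isometry (and its finiteness on $\cM$) in~(\ref{prop:GH_simple:4}).
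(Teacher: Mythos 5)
Your proof is correct, and it follows exactly the route the paper indicates for this cited result: the identity $d_{GH}(X,Y)=\frac12\inf\{\dis R\}$ together with suitably chosen correspondences (the unique one for a single-point $X$, the full correspondence $X\x Y$, and the scaling-invariance of $\cR(X,Y)$), with the $\sup$-difference estimate handling item~(\ref{prop:GH_simple:2}). No gaps; the finiteness caveats you flag in items~(\ref{prop:GH_simple:2}) and~(\ref{prop:GH_simple:4}) are handled properly.
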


\subsection{A Few Elementary Relations}
For calculations of the Gromov--Hausdorff distances one can use the following simple relations, whose proofs can be found in~\cite{IvaTuzSimpDist}.

\begin{prop}\label{prop:max_abs}
For any non-negative $a$ and $b$ it holds
$$
\max\big\{a,|b-a|\big\}\le\max\{a,b\}.
$$
\end{prop}

\begin{prop}\label{prop:many_abs_dif}
Let $A\ss\R$ be a non-empty bounded subset of the reals, and let $\l\in\R$. Then
$$
\sup_{a\in A}|\l-a|=\max\{\l-\inf A,\,\sup A-\l\}=\Big|\l-\frac{\inf A+\sup A}{2}\Big|+\frac{\sup A-\inf A}{2}.
$$
\end{prop}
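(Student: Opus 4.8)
The plan is to reduce both equalities to elementary identities about maxima of two real numbers. Since $A$ is non-empty and bounded, both $\inf A$ and $\sup A$ are finite real numbers, so every quantity appearing in the statement is a well-defined real; this is the only place where boundedness of $A$ is used.

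For the first equality I would start from the pointwise identity $|\l-a|=\max\{\l-a,\ a-\l\}$, valid for each $a\in A$, and then pass to the supremum over $a\in A$. The key observation is that a supremum commutes with a binary maximum: for any two real-valued functions $f,g$ bounded above on a common domain one has $\sup_a\max\{f(a),g(a)\}=\max\{\sup_a f(a),\ \sup_a g(a)\}$, where the inequality ``$\le$'' is immediate because $\max\{f(a),g(a)\}$ does not exceed the right-hand side for any $a$, and ``$\ge$'' holds since $\sup_a f(a)$ and $\sup_a g(a)$ are each bounded by $\sup_a\max\{f,g\}$. Applying this with $f(a)=\l-a$ and $g(a)=a-\l$, and using $\sup_{a\in A}(\l-a)=\l-\inf A$ together with $\sup_{a\in A}(a-\l)=\sup A-\l$, yields $\sup_{a\in A}|\l-a|=\max\{\l-\inf A,\ \sup A-\l\}$.

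For the second equality I would invoke the standard identity $\max\{x,y\}=\frac12\big((x+y)+|x-y|\big)$, which follows at once by checking the cases $x\ge y$ and $x\le y$. Substituting $x=\l-\inf A$ and $y=\sup A-\l$ gives $\frac{x+y}{2}=\frac{\sup A-\inf A}{2}$ and $\frac{|x-y|}{2}=\frac{|2\l-\inf A-\sup A|}{2}=\Big|\l-\frac{\inf A+\sup A}{2}\Big|$, which is precisely the claimed expression.

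No step here presents a genuine obstacle; the only points requiring care are the justification that the supremum passes through the binary maximum, and the bookkeeping of signs in the substitution used for the second identity.
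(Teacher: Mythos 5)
Your proof is correct: splitting $|\l-a|$ as a pointwise maximum, passing the supremum through the binary maximum, and then using $\max\{x,y\}=\frac12\bigl((x+y)+|x-y|\bigr)$ is exactly the standard argument for this elementary fact, and your handling of the case where one of $\l-\inf A$, $\sup A-\l$ is negative is automatic and sound. The paper itself gives no proof here (it defers to the cited reference), so there is nothing further to compare; your write-up fills that gap in the expected way.
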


\begin{prop}\label{prop:many_abs_dif_and_t}
Let $A\ss\R$ be a non-empty bounded subset of the reals, $\inf A\ge0$, and let $\l\in\R$. Then
$$
\sup_{a\in A}\big\{\l,|\l-a|\big\}=\max\{\l,\sup A-\l\}.
$$
\end{prop}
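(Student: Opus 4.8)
The plan is to reduce the claim to Proposition~\ref{prop:many_abs_dif} by first pulling the constant $\l$ out of the supremum. So I would begin by recording the elementary fact that, for any non-empty family of reals $\{t_a\}_{a\in A}$ bounded above and any $c\in\R$, one has $\sup_{a\in A}\max\{c,t_a\}=\max\bigl\{c,\sup_{a\in A}t_a\bigr\}$. This is immediate: $\max\{c,t_a\}\le\max\bigl\{c,\sup_{a\in A}t_a\bigr\}$ for every $a$ gives ``$\le$'', while $\max\{c,t_a\}\ge c$ and $\max\{c,t_a\}\ge t_a$ for every $a$ together give ``$\ge$''. Applying this with $c=\l$ and $t_a=|\l-a|$ (the family is bounded above since $A$ is bounded), we obtain
$$
\sup_{a\in A}\max\bigl\{\l,|\l-a|\bigr\}=\max\Bigl\{\l,\ \sup_{a\in A}|\l-a|\Bigr\}.
$$

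Next I would invoke Proposition~\ref{prop:many_abs_dif}, which gives $\sup_{a\in A}|\l-a|=\max\{\l-\inf A,\ \sup A-\l\}$. Substituting this into the right-hand side above turns it into $\max\{\l,\ \l-\inf A,\ \sup A-\l\}$.

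Finally, the hypothesis $\inf A\ge0$ enters: it yields $\l-\inf A\le\l$, so the term $\l-\inf A$ is dominated by $\l$ and may be deleted from the maximum, leaving exactly $\max\{\l,\ \sup A-\l\}$, which is the asserted value. There is essentially no obstacle here; the only point deserving a moment's care is the opening reduction (moving $\l$ outside the $\sup$), which is where boundedness of $A$ is used, and it is worth noting that the argument is insensitive to the sign of $\l$, so no case analysis on $\l$ is needed.
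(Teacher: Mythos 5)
Your proof is correct: the reduction $\sup_{a\in A}\max\bigl\{\l,|\l-a|\bigr\}=\max\bigl\{\l,\sup_{a\in A}|\l-a|\bigr\}$ is valid for a non-empty bounded $A$, Proposition~\ref{prop:many_abs_dif} then turns the right-hand side into $\max\{\l,\,\l-\inf A,\,\sup A-\l\}$, and the hypothesis $\inf A\ge0$ is exactly what lets you drop the middle term. The paper itself omits the proof (deferring to~\cite{IvaTuzSimpDist}), and your derivation from the immediately preceding Proposition~\ref{prop:many_abs_dif} is the natural intended argument, so there is nothing to object to.
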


\begin{cor}\label{cor:many_abs_dif}
For any $a\ge 0$ and any $\l$ it holds
$$
\max\bigl\{\l,|a-\l|\bigr\}=\max\{\l,a-\l\}.
$$
\end{cor}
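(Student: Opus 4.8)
The statement is a corollary, so the plan is to deduce it directly from Proposition~\ref{prop:many_abs_dif_and_t} by specializing to a one-point set. Concretely, I would set $A=\{a\}$. Since $a\ge0$, the hypothesis $\inf A\ge0$ of Proposition~\ref{prop:many_abs_dif_and_t} is satisfied, and moreover $\sup A=a$. Applying the proposition with this $A$ and the given $\l$ yields
$$
\max\bigl\{\l,|\l-a|\bigr\}=\sup_{a'\in A}\bigl\{\l,|\l-a'|\bigr\}=\max\{\l,\sup A-\l\}=\max\{\l,a-\l\},
$$
which is exactly the claimed identity (using $|\l-a|=|a-\l|$).

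Alternatively, if one prefers a self-contained argument, I would split into the two cases $a\le2\l$ and $a>2\l$. In the first case one checks that $\l\ge0$ (because $a\ge0$) and that both $a-\l\le\l$ and $\l-a\le\l$ hold, so $|a-\l|\le\l$ and both sides equal $\l$. In the second case $a-\l>\l$, and since $a\ge0$ one gets $a-\l>0$, hence $|a-\l|=a-\l$ and both sides equal $a-\l$.

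There is essentially no obstacle here: the only point requiring a moment's care is the sign bookkeeping that lets one drop the absolute value, and in the first route this is already packaged inside Proposition~\ref{prop:many_abs_dif_and_t}. I would present the one-line deduction from that proposition as the proof.
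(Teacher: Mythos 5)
Your deduction is correct and matches the paper's intent: the statement is positioned as an immediate corollary of Proposition~\ref{prop:many_abs_dif_and_t}, obtained exactly as you do by taking $A=\{a\}$ (and your elementary case-split backup is also sound). Nothing further is needed.
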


\section{Gromov--Hausdorff Distance between Bounded Metric Space and Simplex}
\markright{\thesection.~Distance between Bounded Metric Space and Simplex}

We call a metric space $X$ by a \emph{simplex}, if all its non-zero distances equal to each other. We denote by $\D$ a simplex, whose non-zero distances equal $1$. Thus, $\l\D$, $\l>0$, is a simplex, whose non-zero distances equal $\l$.

\subsection{Gromov--Hausdorff Distance to Simplexes with More Points}

The next result generalizes Theorem~4.1 from~\cite{IvaTuzSimpDist}.

\begin{thm}\label{thm:dist-n-simplex-bigger-dim}
Let $X$ be an arbitrary bounded metric space, and $\#X<\#\l\D$, then
$$
2d_{GH}(\l\D,X)=\max\{\l,\diam X-\l\}.
$$
\end{thm}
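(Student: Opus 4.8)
The plan is to use the correspondence description of the Gromov--Hausdorff distance, namely $2d_{GH}(\l\D,X)=\inf\{\dis R:R\in\cR(\l\D,X)\}$, and to prove the two inequalities separately. Note at the outset that $\#X<\#\l\D$ forces $\#\l\D\ge2$, so $\diam\l\D=\l$, and that $\diam X<\infty$ since $X$ is bounded, so the second claim of Proposition~\ref{prop:GH_simple} is applicable.

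For the lower bound I would show that $\dis R\ge\max\{\l,\diam X-\l\}$ for \emph{every} $R\in\cR(\l\D,X)$. The estimate $\dis R\ge\diam X-\l$ is immediate from the second claim of Proposition~\ref{prop:GH_simple}, since $2d_{GH}(\l\D,X)\ge|\diam\l\D-\diam X|=|\l-\diam X|\ge\diam X-\l$. The estimate $\dis R\ge\l$ is where the cardinality hypothesis enters: for each $x\in X$ pick a point $v_x\in R^{-1}(x)$ (possible because $R$ is a correspondence), so the set $\{v_x:x\in X\}$ has cardinality at most $\#X<\#\l\D$; hence there is $v^*\in\l\D$ with $v^*\ne v_x$ for all $x$. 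Choosing any $x^*\in R(v^*)$, the pairs $(v^*,x^*)$ and $(v_{x^*},x^*)$ both lie in $R$ and have distinct first coordinates, so $\dis R\ge\bigl||v^*v_{x^*}|-|x^*x^*|\bigr|=\l$.

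For the upper bound I would exhibit a single correspondence attaining the value. Since $0<\#X<\#\l\D$, there is a surjection $g$ of the underlying set of $\l\D$ onto that of $X$, and $g$ is not injective. Put $R=\{(v,g(v)):v\in\l\D\}\in\cR(\l\D,X)$. In $\dis R$ the pairs with $v=v'$ contribute $0$; a pair with $v\ne v'$ and $g(v)=g(v')$ contributes $\l$, and at least one such pair exists because $g$ is not injective; a pair with $g(v)\ne g(v')$ contributes $\bigl|\l-|g(v)g(v')|\bigr|$. By surjectivity of $g$, the numbers $|g(v)g(v')|$ arising from the last type range exactly over $D:=\{|xx'|:x,x'\in X,\ x\ne x'\}$ (empty if $\#X=1$), which is bounded with $\inf D\ge0$ and $\sup D=\diam X$. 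Then Proposition~\ref{prop:many_abs_dif} gives $\sup_{d\in D}|\l-d|=\max\{\l-\inf D,\diam X-\l\}\le\max\{\l,\diam X-\l\}$, so $\dis R\le\max\{\l,\diam X-\l\}$ and $2d_{GH}(\l\D,X)\le\max\{\l,\diam X-\l\}$. Combined with the lower bound, this proves the theorem.

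The only genuinely nontrivial point is the inequality $\dis R\ge\l$: it is exactly where $\#X<\#\l\D$ is used, and it formalizes the fact that any correspondence must collapse some two points of $\l\D$, which lie at mutual distance $\l$, into a neighbourhood of a single point of $X$. The remaining steps are bookkeeping, the minor points being the degenerate case $\#X=1$ (where $D$ is empty and only the $\l$-term survives) and the observation $\diam\l\D=\l$. An alternative treatment of the lower bound would invoke irreducible correspondences and formula~(\ref{eq:irreducible}), but the direct argument above is shorter.
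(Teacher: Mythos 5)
Your proof is correct, and its overall skeleton (correspondence formulation, a pigeonhole argument for the bound $\dis R\ge\l$, and an explicit ``collapsing'' correspondence for the upper bound) matches the paper's. The genuine difference is in the remaining part of the lower bound: the paper proves, for every correspondence $R$, the inequality $\dis R\ge\max\bigl\{\l,|\diam X-\l|\bigr\}$ directly, by taking a sequence $(x_i,y_i)$ with $|x_iy_i|\to\diam X$ and splitting into cases according to whether a subsequence of these pairs is covered by a single point of $\l\D$ (giving $\dis R\ge\diam X$) or by two distinct points (giving $\dis R\ge|\diam X-\l|$), and then invoking Proposition~\ref{prop:max_abs}; you instead obtain $2d_{GH}(\l\D,X)\ge|\l-\diam X|\ge\diam X-\l$ in one stroke from the diameter estimate in Proposition~\ref{prop:GH_simple}\refitem{prop:GH_simple:2}, which removes the subsequence case analysis entirely and also lets you absorb the degenerate case $\#X=1$ without a separate treatment. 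Your upper-bound correspondence, the graph of an arbitrary surjection $g\:\l\D\to X$, is a mild generalization of the paper's $R_0$ (which is the graph of the particular surjection that is bijective onto $X\sm\{x_0\}$ and sends the rest of $\l\D$ to $x_0$), and your use of Proposition~\ref{prop:many_abs_dif} makes the estimate $\dis R\le\max\{\l,\diam X-\l\}$ explicit where the paper only asserts $\dis R_0\le\max\bigl\{\l,|\diam X-\l|\bigr\}$ and then applies Corollary~\ref{cor:many_abs_dif}. In short: same strategy, but your lower bound is shorter and arguably cleaner, at the cost of leaning on the (standard) diameter inequality rather than proving the needed distortion bound from scratch; the paper's sequence argument yields the slightly stronger pointwise statement about $\dis R$ for every $R$, which is not needed for the theorem.
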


\begin{proof}
If $\#X=1$, then $\diam X=0$, and, by Proposition~\ref{prop:GH_simple}, we have
$$
2d_{GH}(\l\D,X)=\diam\l\D=\l=\max\{\l,\diam X-\l\}.
$$

Now, let $\#X>1$. Choose an arbitrary $R\in\cR(\l\D,X)$. Since $\#X<\#\l\D$, then there exists $x\in X$ such that $\#R^{-1}(x)\ge2$, thus, $\dis R\ge\l$ and $2d_{GH}(\l\D,X)\ge\l$.

Consider an arbitrary sequence $(x_i,y_i)\in X\x X$ such that $|x_iy_i|\to\diam X$. If it contains a subsequence $(x_{i_k},y_{i_k})$ such that for each $i_k$ there exists $z\in\l\D$, $(z,x_{i_k})\in R$, $(z,y_{i_k})\in R$, then $\dis R\ge\diam X$ and $2d_{GH}(\l\D,X)\ge\max\{\l,\diam X\}$.

If such subsequence does not exist, then there exists a subsequence $(x_{i_k},y_{i_k})$ such that for any  $i_k$ there exist distinct $z_k,w_k\in\l\D$, $(z_k,x_{i_k})\in R$, $(w_k,y_{i_k})\in R$, and, therefore, $2d_{GH}(\l\D,X)\ge\max\bigl\{\l,|\diam X-\l|\bigr\}$.

By Proposition~\ref{prop:max_abs},
$$
\max\{\l,\diam X\}\ge\max\bigl\{\l,|\diam X-\l|\bigr\},
$$
thus, in the both cases we have $2d_{GH}(\l\D,X)\ge\max\bigl\{\l,|\diam X-\l|\bigr\}$.

Choose an arbitrary $x_0\in X$, then, by assumption, $\#X>1$, and, thus, the set $X\sm\{x_0\}$ is not empty. Since $\#X<\#\l\D$, then $\l\D$ contains a subset $\l\D'$ of the same cardinality with $X\sm\{x_0\}$. Let $g\:\l\D'\to X\sm\{x_0\}$ be an arbitrary bijection, and $\l\D''=\l\D\sm\l\D'$, then $\l\D''\ne\0$. Consider the following correspondence:
$$
R_0=\Bigl\{\bigl(z',g(z')\bigr):z'\in\l\D'\Bigr\}\cup\bigl(\{x_0\}\x\l\D''\bigr).
$$
Then $\dis R_0\le\max\bigl\{\l,|\diam X-\l|\bigr\}$, and this implies
$$
2d_{GH}(\l\D,X)=\max\bigl\{\l,|\diam X-\l|\bigr\}.
$$
It remains to apply Corollary~\ref{cor:many_abs_dif}.
\end{proof}

\subsection{Gromov--Hausdorff Distance to Simplexes with at most the Same Number of Points}

Let $X$ be an arbitrary set and $m$ a cardinal number, $m\le\#X$.  By $\cD_m(X)$ we denote the set of all partitions of $X$ into $m$ non-empty subsets.

Now, let $X$ be a metric space, then for each $D=\{X_i\}_{i\in I}\in\cD_m(X)$ we put
$$
\diam D=\sup_{i\in I}\diam X_i.
$$
Further, for any non-empty $A,B\ss X$ let
$$
|AB|=\inf\bigl\{|ab|:(a,b)\in A\x B\bigr\},\quad \text{and}\quad |AB|'=\sup\bigl\{|ab|:(a,b)\in A\x B\bigr\},
$$
and for each $D=\{X_i\}_{i\in I}\in\cD_m(X)$ we define
$$
\a(D)=\inf\bigl\{|X_iX_j|:i\ne j\bigr\},\quad \text{and}\quad \b(D)=\sup\bigl\{|X_iX_j|':i\ne j\bigr\}.
$$

Let $\l\D$ be a simplex of cardinality $m$. Choose an arbitrary $D\in\cD_m(X)$, any bijection $g\:\l\D\to D$, and construct the correspondence $R_D\in\cR(\l\D,X)$ in the following way:
$$
R_D=\bigcup_{z\in\l\D}\{z\}\x g(z).
$$
Clearly that each correspondence $R_D$ is irreducible.

The next result gives a natural generalization of Proposition~4.5 from~\cite{IvaTuzSimpDist}.

\begin{prop}\label{prop:disRD}
Let $X$ be an arbitrary bounded metric space and $m=\#\l\D\le\#X$. Then for any $D\in\cD_m(X)$ it holds
$$
\dis R_D=\max\{\diam D,\,\l-\a(D),\,\b(D)-\l\}.
$$
\end{prop}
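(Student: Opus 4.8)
The plan is to compute $\dis R_D$ directly from its definition as a supremum over pairs of pairs in $R_D$, splitting the pairs according to whether the two $\l\D$-coordinates coincide or not. Recall that $R_D=\bigcup_{z\in\l\D}\{z\}\x g(z)$, so a generic element of $R_D$ has the form $(z, x)$ with $x\in g(z)$. Take two elements $(z,x)$ and $(z',x')$ of $R_D$. The quantity we must bound is $\bigl||zz'|-|xx'|\bigr|$, and we consider two cases.

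First, suppose $z=z'$. Then $|zz'|=0$, so the contribution is $|xx'|$, where both $x$ and $x'$ lie in the single block $g(z)=X_i$. Taking the supremum of $|xx'|$ over all $x,x'$ in a common block and then over all blocks gives exactly $\diam D$. Second, suppose $z\ne z'$, so $|zz'|=\l$, while $x\in X_i$ and $x'\in X_j$ with $i\ne j$ (since $g$ is a bijection $\l\D\to D$). Here the contribution is $\bigl|\l-|xx'|\bigr|$. By Proposition~\ref{prop:many_abs_dif} applied to the set $A=\{|xx'|:x\in X_i,\,x'\in X_j,\ i\ne j\}$, which is non-empty and bounded since $X$ is bounded, the supremum of $\bigl|\l-|xx'|\bigr|$ over this set equals $\max\{\l-\inf A,\ \sup A-\l\}$. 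By definition $\inf A=\inf\{|X_iX_j|:i\ne j\}=\a(D)$ and $\sup A=\sup\{|X_iX_j|':i\ne j\}=\b(D)$, so the $z\ne z'$ pairs contribute $\max\{\l-\a(D),\ \b(D)-\l\}$. Combining the two cases, $\dis R_D$ is the maximum of $\diam D$, $\l-\a(D)$, and $\b(D)-\l$, as claimed.

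One point deserving a little care is the interchange of suprema: $\dis R_D$ is a single supremum over all pairs of elements of $R_D$, and I am reorganizing it as the maximum of the supremum over ``same-$z$'' pairs and the supremum over ``different-$z$'' pairs. This is valid because the full index set of pairs is the disjoint union of these two families, and the supremum of a function over a union of sets is the maximum (here the sets are non-empty: the ``same-$z$'' family is non-empty because every block $g(z)$ is non-empty, and the ``different-$z$'' family is non-empty because $m=\#\l\D\ge 2$ — if $m=1$ the statement degenerates but then $X$ has the trivial partition and $\dis R_D=\diam X=\diam D$, consistent with the convention that suprema over empty index sets are $-\infty$ or simply absent). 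I would state the $m\ge 2$ assumption or note the degenerate case in passing.

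The main obstacle, if any, is purely bookkeeping: making sure that $\inf A$ and $\sup A$ are correctly identified with $\a(D)$ and $\b(D)$ rather than with nested inf/sup that could differ. Since $A$ is the union over all ordered pairs $(i,j)$ with $i\ne j$ of the sets $\{|xx'|:x\in X_i,x'\in X_j\}$, we have $\inf A=\inf_{i\ne j}\inf\{|xx'|:\ldots\}=\inf_{i\ne j}|X_iX_j|=\a(D)$ and symmetrically $\sup A=\sup_{i\ne j}|X_iX_j|'=\b(D)$; the ordered-versus-unordered pair distinction is harmless by symmetry of the metric. Everything else is a direct application of Proposition~\ref{prop:many_abs_dif}, so the proof is short.
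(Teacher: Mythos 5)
Your proof is correct, and since the paper states Proposition~\ref{prop:disRD} without proof (referring to Proposition~4.5 of~\cite{IvaTuzSimpDist}), your direct computation — splitting the pairs in $R_D$ into the same-$z$ case, which yields $\diam D$, and the $z\ne z'$ case, which yields $\max\{\l-\a(D),\b(D)-\l\}$ via Proposition~\ref{prop:many_abs_dif} — is exactly the standard argument intended here. Your side remarks on the degenerate case $m=1$ and on identifying $\inf A=\a(D)$, $\sup A=\b(D)$ are fine and require no changes.
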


\begin{cor}\label{cor:disRD}
Let $X$ be an arbitrary bounded metric space and $m=\#\l\D\le\#X$. Then for any $D\in\cD_m(X)$ we have
$$
\dis R_D=\max\{\diam D,\,\l-\a(D),\,\diam X-\l\}.
$$
\end{cor}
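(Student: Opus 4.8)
The plan is to deduce the statement directly from Proposition~\ref{prop:disRD}, whose conclusion reads $\dis R_D=\max\{\diam D,\,\l-\a(D),\,\b(D)-\l\}$; hence everything reduces to showing that replacing the term $\b(D)-\l$ by $\diam X-\l$ does not change the value of the maximum.

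The key observation I would establish first is the identity
$$
\diam X=\max\{\diam D,\,\b(D)\}.
$$
The inequalities $\diam D\le\diam X$ and $\b(D)\le\diam X$ are immediate, since each $\diam X_i$ and each $|X_iX_j|'$ is a supremum of distances between points of $X$. For the reverse inequality I would split a pair of points $x,y\in X$ according to the partition $D$: if $x,y$ lie in the same block $X_i$, then $|xy|\le\diam X_i\le\diam D$, and if they lie in distinct blocks $X_i,X_j$, then $|xy|\le|X_iX_j|'\le\b(D)$; in either case $|xy|\le\max\{\diam D,\b(D)\}$, and taking the supremum over $x,y$ gives $\diam X\le\max\{\diam D,\b(D)\}$. (When $\#D=1$ the second alternative never occurs and one simply has $\diam X=\diam D$, which is consistent with the convention $\b(D)=\sup\0$.)

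Granting this identity, I would then write $\diam X-\l=\max\{\diam D-\l,\,\b(D)-\l\}$, so that
$$
\max\{\diam D,\,\l-\a(D),\,\diam X-\l\}=\max\{\diam D,\,\l-\a(D),\,\diam D-\l,\,\b(D)-\l\}.
$$
Since $\l>0$ we have $\diam D-\l\le\diam D$, so the term $\diam D-\l$ is redundant in the last maximum; removing it yields $\max\{\diam D,\,\l-\a(D),\,\b(D)-\l\}$, which equals $\dis R_D$ by Proposition~\ref{prop:disRD}. This gives the claimed formula.

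I do not expect any substantial obstacle: the only point requiring a small argument is the identity $\diam X=\max\{\diam D,\b(D)\}$, which amounts to classifying pairs of points of $X$ as lying in one or two blocks of $D$; the rest is straightforward manipulation of maxima.
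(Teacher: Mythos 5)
Your proof is correct and follows essentially the same route as the paper: both hinge on comparing $\diam X$ with $\diam D$ and $\b(D)$. The paper argues by cases ($\diam D<\diam X$, where a sequence realizing $\diam X$ must eventually join points in different blocks, forcing $\b(D)=\diam X$, versus $\diam D=\diam X$), while you package the same observation as the single identity $\diam X=\max\{\diam D,\,\b(D)\}$ followed by routine manipulation of maxima --- a minor streamlining rather than a different argument.
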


\begin{proof}
Notice that $\diam D\le\diam X$ and $\b(D)\le\diam X$. In addition, if $\diam D<\diam X$, and $(x_i,y_i)\in X\x X$ is a sequence such that $|x_iy_i|\to\diam X$, then, starting from some $i$, the points $x_i$ and $y_i$ belong to different elements of $D$, therefore, in this case we have $\b(D)=\diam X$, and the formula is proved.

Now, let $\diam D=\diam X$, then $\b(D)-\l\le\diam X$ and $\diam X-\l\le\diam X$, thus
\begin{multline*}
\max\{\diam D,\,\l-\a(D),\,\b(D)-\l\}=\max\{\diam X,\,\l-\a(D)\}=\\ =\max\{\diam D,\,\l-\a(D),\,\diam X-\l\},
\end{multline*}
that completes the proof.
\end{proof}

Let us formulate and prove an analogue of Proposition~4.6 from~\cite{IvaTuzSimpDist}, without use of Theorem~4.3 from~\cite{IvaTuzSimpDist}.

\begin{prop}\label{prop:GH-dist-RD}
Let $X$ be an arbitrary bounded metric space and $m=\#\l\D\le\#X$. Then
$$
2d_{GH}(\l\D,X)=\inf_{D\in\cD_m(X)}\dis R_D.
$$
\end{prop}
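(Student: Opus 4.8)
The plan is to establish the two inequalities $2d_{GH}(\l\D,X)\le\inf_{D}\dis R_D$ and $2d_{GH}(\l\D,X)\ge\inf_{D}\dis R_D$ separately. The first is immediate: each $R_D$ is a correspondence in $\cR(\l\D,X)$, so by Proposition~1.1 we have $2d_{GH}(\l\D,X)\le\dis R_D$ for every $D\in\cD_m(X)$, and taking the infimum over $D$ gives the desired bound.

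For the reverse inequality, the idea is to use Corollary~\ref{cor:GH-distance-irreducinle}, which lets us compute $2d_{GH}(\l\D,X)$ as the infimum of $\dis R$ over \emph{irreducible} correspondences $R\in\cR^0(\l\D,X)$. So it suffices to show that for every irreducible $R\in\cR^0(\l\D,X)$ there is a partition $D\in\cD_m(X)$ with $\dis R_D\le\dis R$. By the structure of irreducible correspondences recalled after~(\ref{eq:irreducible}), $R$ determines partitions $D^R_{\l\D}$ of $\l\D$ and $D^R_X$ of $X$ and a bijection $f_R$ between them, where any element of $D^R_{\l\D}$ with more than one point corresponds to a single point of $X$, and vice versa. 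The key observation is that $\#\l\D=m\ge\#D^R_{\l\D}$, with equality exactly when $D^R_{\l\D}$ is the partition into singletons; in general $\#D^R_X=\#D^R_{\l\D}\le m\le\#X$. I would take $D$ to be a refinement of $D^R_X$ into exactly $m$ non-empty parts (possible since $\#D^R_X\le m\le\#X$), splitting the single-point images of multi-point blocks of $\l\D$ into singletons as needed. Then I would check, using Proposition~\ref{prop:disRD} (or its Corollary~\ref{cor:disRD}) together with the explicit formula $\dis R=\max\{\dots\}$ worked out for correspondences of the form $R_D$ but now for the more general $R$, that each of the three quantities $\diam D$, $\l-\a(D)$, $\b(D)-\l$ (equivalently $\diam X-\l$) is bounded above by $\dis R$. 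Concretely: $\diam D\le\diam D^R_X\le\dis R$ since distances inside a block of $D^R_X$ are matched against the single distance (zero or $\l$) in the corresponding block of $\l\D$; the bound $\l-\a(D)\le\dis R$ comes from comparing an inter-block pair of $X$ realizing $\a(D)$ against a pair of distinct points of $\l\D$; and $\diam X-\l\le\dis R$ follows by the same argument as in the proof of Theorem~\ref{thm:dist-n-simplex-bigger-dim}, examining whether a near-diameter pair of $X$ is covered by one point or two points of $\l\D$, and invoking Proposition~\ref{prop:max_abs}.

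The main obstacle I anticipate is the bookkeeping around cardinalities when passing from the partition $D^R_X$ induced by an arbitrary irreducible correspondence to a genuine $m$-part partition $D$, and making sure the refinement does not increase $\diam D$ — refining can only decrease diameters of blocks, so this is fine, but one must be careful that the refinement still has exactly $m$ parts and that $\a(D)$ does not drop below what $\dis R$ controls (refining \emph{increases} $\a$, so again this is safe). A second delicate point is the $\diam X-\l$ term when $\diam X$ is not attained: the argument must be run with sequences $(x_i,y_i)$ with $|x_iy_i|\to\diam X$ and a case split (covered by one point of $\l\D$ versus two), exactly mirroring Theorem~\ref{thm:dist-n-simplex-bigger-dim}; this is where the boundedness hypothesis on $X$ is used and where one must pass to subsequences. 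Once these estimates are in place, $\dis R_D=\max\{\diam D,\l-\a(D),\diam X-\l\}\le\dis R$ by Corollary~\ref{cor:disRD}, and taking the infimum over $R\in\cR^0(\l\D,X)$ yields $\inf_{D}\dis R_D\le 2d_{GH}(\l\D,X)$, completing the proof.
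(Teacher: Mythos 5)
Your overall strategy coincides with the paper's: reduce to irreducible correspondences via Corollary~\ref{cor:GH-distance-irreducinle}, pass to the partition $D^R_X$ coming from the structure~(\ref{eq:irreducible}), refine it to some $D\in\cD_m(X)$, and compare $\dis R_D$ with $\dis R$ through Corollary~\ref{cor:disRD}. The bounds $\diam D\le\diam D^R_X\le\dis R$ and $\diam X-\l\le\dis R$ are fine (the latter exactly as in the proof of Theorem~\ref{thm:dist-n-simplex-bigger-dim}), and the easy inequality $2d_{GH}(\l\D,X)\le\inf_D\dis R_D$ is correct.

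There is, however, a genuine gap in your treatment of the term $\l-\a(D)$. You justify $\l-\a(D)\le\dis R$ by ``comparing an inter-block pair of $X$ realizing $\a(D)$ against a pair of distinct points of $\l\D$''. This works only if the two blocks of $D$ containing that pair lie in \emph{different} blocks of $D^R_X$; if they arise from splitting one multi-point block of $D^R_X$ --- which is precisely what your refinement does --- then both points are related to the \emph{same} point of $\l\D$, and the comparison yields only $|ab|\le\dis R$, not $\l-|ab|\le\dis R$. Your safety remark that ``refining increases $\a$'' is also backwards: refining enlarges the set of inter-block pairs, so it can only decrease $\a(D)$. The missing ingredient is exactly the observation the paper makes at the start of its proof: if $R$ is not already of the form $R_D$, then $D^R_{\l\D}$ has a block with at least two points, i.e., some $x\in X$ satisfies $\#R^{-1}(x)\ge2$, whence $\dis R\ge\l\ge\l-\a(D)$; in the remaining case no refinement is needed and your pair argument applies. (The paper packages this via the quotient simplex $\l\D'$ and the identity $\dis R=\max\{\l,\dis R'\}$ with $R'=R_{D^R_X}$, then invokes Corollary~\ref{cor:disRD}.) A minor slip as well: what must be split in the refinement are the multi-point blocks of $D^R_X$ (images of singleton blocks of $\l\D$), not ``the single-point images of multi-point blocks of $\l\D$'', which are singletons and cannot be split.
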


\begin{proof}
By Corollary~\ref{cor:GH-distance-irreducinle}, $2d_{GH}(\l\D,X)=\inf_{R\in\cR^0(\l\D,X)}\dis R$, thus it suffice to prove that for any irreducible correspondence $R\in\cR^0(\l\D,X)$ there exists $D\in\cD_m(X)$ such that $\dis R_D\le\dis R$.

Let us choose an arbitrary $R\in\cR^0(\l\D,X)$ such that it cannot be represented in the form $R_D$, then the partition $D^R_{\l\D}$ is not pointwise, i.e., there exists $x\in X$ such that $\#R^{-1}(x)\ge 2$, therefore, $\dis R\ge\l$.

Define a metric on the set $D^R_{\l\D}$ to be equal $\l$ between any its distinct elements, then this metric space is isometric to a  simplex $\l\D'$. The correspondence $R$ generates naturally  another correspondence $R'\in\cR(\l\D',X)$, namely, if $D^R_{\l\D}=\{\D_i\}_{i\in I}$, and $f_R\:D^R_{\l\D}\to D^R_X$ is the bijection generated by $R$, then
$$
R'=\bigcup_{i\in I}\{\D_i\}\x f_R(\D_i).
$$
It is easy to see that $\dis R=\max\{\l,\,\dis R'\}$. Moreover, $R'$ is generated by the partition $D'=D^R_X$, i.e., $R'=R_{D'}$, thus, by Corollary~\ref{cor:disRD}, we have
$$
\dis R'=\max\{\diam D',\,\l-\a(D'),\,\diam X-\l\},
$$
and hence,
$$
\dis R=\max\{\l, \diam D',\,\l-\a(D'),\,\diam X-\l\}=\max\{\l,\,\diam D',\,\diam X-\l\}.
$$
Since $\#D'\le m$, the partition $D'$ has a subpartition $D\in\cD_m(X)$. Clearly, $\diam D\le\diam D'$, therefore,
$$
\dis R_D=\max\{\diam D,\,\l-\a(D),\,\diam X-\l\}\le\max\{\diam D',\,\l,\,\diam X-\l\}=\dis R,
$$
q.e.d.
\end{proof}

\begin{cor}\label{cor:GH-dist-alpha-beta}
Let $X$ be an arbitrary bounded metric space and $m=\#\l\D\le\#X$. Then
$$
2d_{GH}(\l\D,X)=\inf_{D\in\cD_m(X)}\max\{\diam D,\,\l-\a(D),\,\diam X-\l\}.
$$
\end{cor}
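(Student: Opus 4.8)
The final statement to prove is Corollary~\ref{cor:GH-dist-alpha-beta}, which follows by combining Proposition~\ref{prop:GH-dist-RD} with Proposition~\ref{prop:disRD} (or, more conveniently, its Corollary~\ref{cor:disRD}). Let me sketch the proof.

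The plan is to simply substitute the explicit formula for $\dis R_D$ into the infimum expression from Proposition~\ref{prop:GH-dist-RD}. By Proposition~\ref{prop:GH-dist-RD}, we have $2d_{GH}(\l\D,X)=\inf_{D\in\cD_m(X)}\dis R_D$, so it remains to evaluate $\dis R_D$ for each $D\in\cD_m(X)$. For this I would invoke Corollary~\ref{cor:disRD}, which gives $\dis R_D=\max\{\diam D,\,\l-\a(D),\,\diam X-\l\}$ directly in the desired form. Substituting this into the infimum yields exactly the claimed identity.

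The only subtlety — and it is hardly an obstacle — is that one must make sure the hypotheses of the two cited results are met, but they are literally identical to the hypotheses of the corollary being proved: $X$ bounded and $m=\#\l\D\le\#X$. In particular, boundedness of $X$ is what guarantees $\diam X<\infty$ so that the quantity $\diam X-\l$ makes sense and all suprema defining $\a(D)$, $\b(D)$, $\diam D$ are finite. So there is genuinely nothing to do beyond chaining the two statements together.

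Here is the proof.

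\begin{proof}
By Proposition~\ref{prop:GH-dist-RD} we have
$$
2d_{GH}(\l\D,X)=\inf_{D\in\cD_m(X)}\dis R_D,
$$
and by Corollary~\ref{cor:disRD}, for every $D\in\cD_m(X)$ it holds
$$
\dis R_D=\max\{\diam D,\,\l-\a(D),\,\diam X-\l\}.
$$
Substituting the latter into the former gives
$$
2d_{GH}(\l\D,X)=\inf_{D\in\cD_m(X)}\max\{\diam D,\,\l-\a(D),\,\diam X-\l\},
$$
as required.
\end{proof}
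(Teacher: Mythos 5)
Your proof is correct and is exactly the argument the paper intends: Corollary~\ref{cor:GH-dist-alpha-beta} is stated as an immediate consequence of Proposition~\ref{prop:GH-dist-RD} combined with Corollary~\ref{cor:disRD}, which is precisely your chaining of the two statements. Nothing is missing.
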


For any metric space $X$ put
$$
\e(X)=\inf\bigl\{|xy|:x,y\in X,\,x\ne y\bigr\}.
$$
Notice that $\e(X)\le\diam X$, and for a bounded $X$ the equality holds, iff $X$ is a simplex.

Corollary~\ref{cor:GH-dist-alpha-beta} immediately implies the following result that is proved in~\cite{IvaTuzSimpDist}.

\begin{thm}[\cite{IvaTuzSimpDist}]\label{thm:dist-n-simplex-same-dim}
Let $X$ be a finite metric space and $\#\l\D=\#X$, then
$$
2d_{GH}(\l\D,X)=\max\bigl\{\l-\e(X),\,\diam X-\l\bigr\}.
$$
\end{thm}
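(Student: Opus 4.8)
The plan is to derive Theorem~\ref{thm:dist-n-simplex-same-dim} directly from Corollary~\ref{cor:GH-dist-alpha-beta} by analyzing what partitions $D\in\cD_m(X)$ look like when $X$ is finite and $m=\#X$. Since $m=\#X$ and each partition in $\cD_m(X)$ consists of exactly $m$ non-empty blocks, a counting argument forces every block to be a singleton: there is only one partition, the pointwise one $D_0=\{\{x\}:x\in X\}$. Hence the infimum in Corollary~\ref{cor:GH-dist-alpha-beta} is attained trivially at $D_0$, and the whole proof reduces to computing the three quantities $\diam D_0$, $\l-\a(D_0)$, and $\diam X-\l$ for this single partition.

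First I would observe that $\diam D_0=\sup_i\diam\{x_i\}=0$, since every block is a single point, so the term $\diam D$ disappears from the maximum. Next, $\a(D_0)=\inf\{|X_iX_j|:i\ne j\}$; because each $X_i$ is a singleton $\{x_i\}$, we get $|X_iX_j|=|x_ix_j|$, and the infimum over all pairs of distinct points is precisely $\e(X)$ by its definition. Therefore $\l-\a(D_0)=\l-\e(X)$. Substituting into the formula of Corollary~\ref{cor:GH-dist-alpha-beta} gives
$$
2d_{GH}(\l\D,X)=\max\{0,\,\l-\e(X),\,\diam X-\l\}.
$$

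Finally I would absorb the redundant $0$: since $\e(X)\le\diam X$, we have $(\l-\e(X))+(\diam X-\l)=\diam X-\e(X)\ge 0$, so at least one of $\l-\e(X)$ and $\diam X-\l$ is non-negative, and the term $0$ can be dropped from the maximum. This yields $2d_{GH}(\l\D,X)=\max\{\l-\e(X),\,\diam X-\l\}$, as claimed. I do not expect any serious obstacle here — the only point requiring a moment's care is the claim that $\cD_m(X)$ is a singleton when $m=\#X<\infty$, which is an elementary pigeonhole observation, and the harmless removal of the $0$ term from the maximum. The finiteness hypothesis on $X$ is used precisely to guarantee that $\cD_m(X)$ contains only the pointwise partition; for infinite $X$ of cardinality $m$ one could have non-pointwise partitions into $m$ blocks, which is why the theorem is stated for finite $X$.
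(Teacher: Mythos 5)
Your proposal is correct and follows exactly the route the paper intends: the paper states that Theorem~\ref{thm:dist-n-simplex-same-dim} is an immediate consequence of Corollary~\ref{cor:GH-dist-alpha-beta}, and your argument just spells out the details (the unique partition in $\cD_m(X)$ for $m=\#X$ finite is the pointwise one, with $\diam D_0=0$ and $\a(D_0)=\e(X)$, after which the $0$ is absorbed since $\e(X)\le\diam X$). No discrepancies worth noting.
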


\subsection{New Exact Formulas and Estimates}

Pass to our main new results. For an arbitrary metric space $X$, $m=\#X$, put
\begin{gather*}
\a_m^-(X)=\inf_{D\in\cD_m(X)}\a(D),\qquad \a_m(X)=\a_m^+(X)=\sup_{D\in\cD_m(X)}\a(D),\\
d_m(X)=d_m^-(X)=\inf_{D\in\cD_m(X)}\diam D,\qquad d_m^+(X)=\sup_{D\in\cD_m(X)}\diam D.
\end{gather*}

\begin{rk}
We introduced simpler notations for $\a_m^+(X)$ and $d_m^-(X)$, because these values, in contrast to their ``twins'' $\a_m^-(X)$ and $d_m^+(X)$, appear much more often in  the formulas below.
\end{rk}

Notice that $\a_m(X)=0$, iff for each $D\in\cD_m(X)$ the equality $\a(D)=0$ holds.

\begin{examp}\label{examp:inf-card}
Let $X$ be an infinite compact metric space, and $m$ an infinite cardinal number, $m\le\#X$. Then $\a_m(X)=0$.

Indeed, choose an arbitrary $D=\{X_i\}_{i\in I}\in\cD_m(X)$, and pick up a point $x_i\in X_i$ in each $X_i$. Since $X$ is compact, the set $\{x_i\}_{i\in I}$ contains a convergent subsequence $\{x_{i_k}\}$ consisting of pairwise distinct points. Therefore, $|X_{i_k}X_{i_{k+1}}|\to0$ as $k\to\infty$, thus $\a(D)=0$.
\end{examp}

\begin{examp}
Let $X$ be a bounded infinite metric space represented as a disjoint union of $n$ infinite compact spaces, and let $m$ be an infinite cardinal number, $n<m\le\#X$, then $\a_m(X)=0$.

Indeed, let $\{Z_j\}_{j\in J}$, $\#J=n$, be a partition of the space $X$ into $n$ compact subsets $Z_j$. Choose an arbitrary $D=\{X_i\}_{i\in I}\in\cD_m(X)$. Since $n<m$, then for some $j\in J$ the family of all non-empty intersections $X_i\cap Z_j$ forms an infinite partition of the compact subset $Z_j$. It remains to apply the reasoning from Example~\ref{examp:inf-card}.
\end{examp}

\begin{examp}
Let $X$ be a connected bounded metric space, and let $m$ be a cardinal number, $2\le m\le\#X$. Then $\a_m(X)=0$.

Indeed, choose an arbitrary $D=\{X_i\}_{i\in I}\in\cD_m(X)$, any $X_i$, and put $X'_i=\cup_{j\ne i\in I}X_j$, then $D'=\{X_i,X'_i\}\in\cD_2(X)$. Since the space $X$ is connected, then $|X_iX'_i|=0$, thus there exists a sequence $X_{j_k}$, $j_k\ne i$, such that $|X_iX_{j_k}|\to0$ as $k\to\infty$, and hence, $\a(D)=0$.
\end{examp}

The following example can be considered similarly.

\begin{examp}
Let $X$ be an arbitrary bounded metric space consisting of $n$ connected components, and let $m$ be a cardinal number, $n<m\le\#X$, then $\a_m(X)=0$.
\end{examp}

Corollary~\ref{cor:GH-dist-alpha-beta} implies the following result.

\begin{thm}\label{thm:dist-n-simplex-same-dim-alpha0}
Let $X$ be an arbitrary bounded metric space, $m=\#\l\D\le\#X$, and $\a_m(X)=0$, then
$$
2d_{GH}(\l\D,X)=\max\bigl\{d_m(X),\,\l,\,\diam X-\l\bigr\}.
$$
\end{thm}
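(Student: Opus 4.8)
The plan is to derive this directly from Corollary~\ref{cor:GH-dist-alpha-beta}, which already gives
$$
2d_{GH}(\l\D,X)=\inf_{D\in\cD_m(X)}\max\{\diam D,\,\l-\a(D),\,\diam X-\l\}.
$$
Since $\a_m(X)=0$, for every $D\in\cD_m(X)$ we have $\a(D)\le\a_m(X)=0$, hence $\a(D)=0$ (distances are non-negative), so the term $\l-\a(D)$ equals $\l$ for \emph{every} partition $D$. Thus the right-hand side simplifies to
$$
2d_{GH}(\l\D,X)=\inf_{D\in\cD_m(X)}\max\{\diam D,\,\l,\,\diam X-\l\}=\max\Bigl\{\l,\,\diam X-\l,\,\inf_{D\in\cD_m(X)}\diam D\Bigr\},
$$
where in the last step I pull the two $D$-independent constants $\l$ and $\diam X-\l$ out of the infimum. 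By definition $\inf_{D\in\cD_m(X)}\diam D=d_m(X)$, which yields the claimed formula.

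The one genuinely nonobvious point — and what I expect to be the main (very mild) obstacle — is the interchange
$$
\inf_{D}\max\{\diam D,\,c\}=\max\Bigl\{\inf_D\diam D,\,c\Bigr\}
$$
for a constant $c=\max\{\l,\diam X-\l\}\ge0$. This holds because $t\mapsto\max\{t,c\}$ is nondecreasing and continuous on $\R$, so it commutes with infima of bounded-below sets; concretely, $\max\{\diam D,c\}\ge\max\{d_m(X),c\}$ for each $D$ gives ``$\ge$'', and choosing $D$ with $\diam D$ arbitrarily close to $d_m(X)=\inf_D\diam D$ gives ``$\le$''. I would state this in one line rather than invoking Proposition~\ref{prop:max_abs} or Corollary~\ref{cor:many_abs_dif}, since here the simplification is purely about the structure of the infimum, not about absolute values.

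Finally I would note that all quantities are finite: $X$ is bounded so $\diam X<\infty$, hence $0\le d_m(X)\le d_m^+(X)\le\diam X<\infty$ and the expression $\max\{d_m(X),\l,\diam X-\l\}$ is well-defined. No separate treatment of small cardinalities is needed because the hypothesis $\a_m(X)=0$ already presupposes $\cD_m(X)\ne\0$ and, in particular (by the remark preceding Example~\ref{examp:inf-card}), forces $\a(D)=0$ for every $D\in\cD_m(X)$, so the argument is uniform. The whole proof is therefore three short displayed equalities plus the one-line justification of the infimum interchange.
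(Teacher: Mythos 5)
Your proof is correct and is essentially the paper's own argument: the paper states this theorem as an immediate consequence of Corollary~\ref{cor:GH-dist-alpha-beta}, and the intended deduction is exactly your observation that $\a_m(X)=0$ forces $\a(D)=0$ for every $D\in\cD_m(X)$, after which the constants $\l$ and $\diam X-\l$ pull out of the infimum, leaving $d_m(X)=\inf_D\diam D$. Your explicit one-line justification of the interchange of $\inf$ and $\max$ is a welcome addition to what the paper leaves implicit.
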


Consider the graph of dependence of the value $2d_{GH}(\l\D,X)$ on $\l$ in accordance with  Theorem~\ref{thm:dist-n-simplex-same-dim-alpha0}, see Figure~\ref{fig:alpha0}.

\ig{alpha0}{0.27}{fig:alpha0}{The graph of the function $g(\l)=2d_{GH}(\l\D,X)$ for $\a_m(X)=0$ and various values of $d_m(X)$.}

Figure~\ref{fig:alpha0} leads naturally to the following result.

\begin{cor}
Let $X$ --- be an arbitrary bounded metric space, $m=\#\l\D\le\#X$, and $\a_m(X)=0$, then
\begin{enumerate}
\item if $d_m(X)\le\frac12\diam X$, then
$$
2d_{GH}(\l\D,X)=
\begin{cases}
\diam X-\l,&\text{for}\ \ \l\le\frac12\diam X,\\
\hfil\l,&\text{for}\ \ \l\ge\frac12\diam X;
\end{cases}
$$
\item if $d_m(X)>\frac12\diam X$, then
$$
2d_{GH}(\l\D,X)=
\begin{cases}
\diam X-\l,&\text{for}\ \ \l\le\diam X-d_m(X),\\
\hfil d_m(X),&\text{for}\ \ \diam X-d_m(X)\le\l\le d_m(X),\\
\hfil\l,&\text{for}\ \ \l\ge d_m(X).
\end{cases}
$$
\end{enumerate}
\end{cor}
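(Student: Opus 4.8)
The plan is to derive this corollary directly from Theorem~\ref{thm:dist-n-simplex-same-dim-alpha0}, which already gives $2d_{GH}(\l\D,X)=\max\bigl\{d_m(X),\,\l,\,\diam X-\l\bigr\}$ under the hypotheses $m=\#\l\D\le\#X$ and $\a_m(X)=0$. Everything then reduces to analyzing, for fixed $X$ (hence fixed $\diam X$ and fixed $d_m(X)$), the piecewise-linear function $g(\l)=\max\bigl\{d_m(X),\,\l,\,\diam X-\l\bigr\}$ of the single real variable $\l>0$, and simply reading off which of the three terms dominates on which subinterval. Write $D=\diam X$ and $d=d_m(X)$ for brevity during the argument; recall from the paragraph preceding Theorem~\ref{thm:dist-n-simplex-same-dim-alpha0} that $d=d_m(X)\le\diam X=D$ always holds (each element of a partition has diameter at most $\diam X$), so the two cases $d\le D/2$ and $d>D/2$ are exhaustive.

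First I would treat the comparison between the two ``linear'' terms $\l$ and $D-\l$: one has $\l\ge D-\l$ exactly when $\l\ge D/2$, so $\max\{\l,D-\l\}$ equals $D-\l$ for $\l\le D/2$ and equals $\l$ for $\l\ge D/2$, and in both cases $\max\{\l,D-\l\}\ge D/2$. In case~(1), where $d\le D/2$, this immediately forces $g(\l)=\max\{\l,D-\l\}$ for all $\l$, since the constant term $d$ never exceeds $\max\{\l,D-\l\}$; substituting the two-piece description of $\max\{\l,D-\l\}$ gives exactly the claimed formula with breakpoint at $\tfrac12\diam X$.

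Next, in case~(2), where $d>D/2$, I would locate the two values of $\l$ at which the constant term $d$ equals each linear term: $D-\l=d$ at $\l=D-d$, and $\l=d$ at $\l=d$. Since $d>D/2>D-d$, these breakpoints are ordered $0<D-d<D/2<d$ (using also $d\le D$ so that $D-d\ge0$). On $\l\le D-d$ we have $D-\l\ge d\ge\l$ (the last inequality because $\l\le D-d\le d$), so $g(\l)=D-\l$; on $D-d\le\l\le d$ both linear terms are $\le d$, so $g(\l)=d$; on $\l\ge d$ we have $\l\ge d\ge D-\l$, so $g(\l)=\l$. This yields precisely the three-piece formula stated in the corollary.

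There is no real obstacle here: the only thing to be careful about is the bookkeeping of the inequalities among $0$, $D-d$, $D/2$, and $d$ in case~(2), which all follow from the single fact $D/2<d\le D$, together with checking that the three pieces agree at the shared breakpoints (so the ``$\le$'' on both sides of each interval is consistent). I would also remark that the hypotheses of Theorem~\ref{thm:dist-n-simplex-same-dim-alpha0} are in force throughout, so no additional input beyond that theorem and elementary arithmetic is needed; the figure~\ref{fig:alpha0} is exactly the graph of $g$ and serves only as motivation.
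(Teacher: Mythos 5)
Your proposal is correct and follows essentially the same route as the paper: the corollary is obtained by an elementary case analysis of the piecewise-linear function $\max\bigl\{d_m(X),\,\l,\,\diam X-\l\bigr\}$ from Theorem~\ref{thm:dist-n-simplex-same-dim-alpha0}, which is exactly what the paper indicates by reading the formula off the graph in Figure~\ref{fig:alpha0}. Your bookkeeping of the breakpoints $\diam X-d_m(X)$, $\frac12\diam X$, and $d_m(X)$ in the two cases is accurate, so nothing is missing.
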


Another particular case corresponds to the condition $d_m(X)=\diam X$, that is equivalent to the one $\diam D=\diam X$ for all $D\in\cD_m(X)$.

\begin{examp}
For any simplex $X=\l\D$, any cardinal number $0<m<\#\l\D$, and any $D\in\cD_m(X)$ it holds $\diam D=\l=\diam X$, thus $d_m(X)=\diam X$.
\end{examp}

\begin{examp}
Let $X=S^1$ be the standard unit circle in the Euclidean plane, and $m=2$. Then $d_m(X)=\diam X=2$.

Indeed, suppose the contrary, i.e\., there exists $D=\{X_1,X_2\}\in\cD_m(X)$ such that $\diam D<2$. For $x\in X$ by $x'$ we denote  the point of the circle $X$ that is diametrically opposite to $x$. The above assumption implies that for each $x\in X_1$ the point $x'$ belongs to $X_2$, and, therefore, some open neighborhood of $x$ belongs to $X_1$ too. Thus, $X_1$ and, similarly, $X_2$ are non-empty open subsets of the circle $X$ forming a partition of $X$, but this contradicts to connectedness of $X$.
\end{examp}

\begin{cor}
Let $X$ be an arbitrary bounded metric space, $m=\#\l\D\le\#X$, and $d_m(X)=\diam X$, then
$$
2d_{GH}(\l\D,X)=\max\bigl\{\diam X,\,\l-\a_m(X)\bigr\}.
$$
In particular,
\begin{enumerate}
\item for $\l\le\diam X+\a_m(X)$ it holds $2d_{GH}(\l\D,X)=\diam X$\rom;
\item for $\l\ge\diam X+\a_m(X)$ it holds $2d_{GH}(\l\D,X)=\l-\a_m(X)$.
\end{enumerate}
\end{cor}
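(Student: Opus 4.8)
The plan is to read everything off Corollary~\ref{cor:GH-dist-alpha-beta}. First I would record the reduction of the hypothesis: the condition $d_m(X)=\diam X$ is equivalent to $\diam D=\diam X$ for \emph{every} $D\in\cD_m(X)$. Indeed $\diam D\le\diam X$ always holds, so $d_m(X)=\inf_{D\in\cD_m(X)}\diam D=\diam X$ can happen only if no partition has strictly smaller diameter. Plugging this into
$$
2d_{GH}(\l\D,X)=\inf_{D\in\cD_m(X)}\max\{\diam D,\,\l-\a(D),\,\diam X-\l\},
$$
and using $\l>0$ so that $\diam X-\l\le\diam X=\diam D$, the quantity under the infimum collapses to $\max\{\diam X,\,\l-\a(D)\}$. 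Hence $2d_{GH}(\l\D,X)=\inf_{D\in\cD_m(X)}\max\{\diam X,\,\l-\a(D)\}$.

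Next I would evaluate this infimum by the standard monotonicity argument. The map $t\mapsto\max\{\diam X,\,\l-t\}$ is non-increasing, and $\a(D)\le\a_m(X)=\sup_{D\in\cD_m(X)}\a(D)$ for every admissible $D$, so every term of the infimum is bounded below by $\max\{\diam X,\,\l-\a_m(X)\}$; this yields ``$\ge$''. For ``$\le$'', given $\e>0$ choose $D\in\cD_m(X)$ with $\a(D)>\a_m(X)-\e$, whence $\max\{\diam X,\,\l-\a(D)\}\le\max\{\diam X,\,\l-\a_m(X)\}+\e$; letting $\e\to0$ gives the reverse inequality. Therefore
$$
2d_{GH}(\l\D,X)=\max\{\diam X,\,\l-\a_m(X)\},
$$
as claimed.

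The two itemized cases are then immediate consequences: the inequality $\l-\a_m(X)\le\diam X$ is equivalent to $\l\le\diam X+\a_m(X)$, and in that range the maximum equals $\diam X$, while in the complementary range $\l\ge\diam X+\a_m(X)$ the maximum equals $\l-\a_m(X)$.

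I do not expect a genuine obstacle here; the statement is essentially a specialization of Corollary~\ref{cor:GH-dist-alpha-beta}. The only two points that need a line of care are the equivalence ``$d_m(X)=\diam X \iff \diam D=\diam X$ for all $D\in\cD_m(X)$'' (which is what lets the term $\diam D$ be pulled out of the infimum) and the bookkeeping that $\diam X$ dominates $\diam X-\l$ for $\l>0$; everything after that is the same ``infimum of a monotone function of $\a(D)$'' manipulation already used to pass from Proposition~\ref{prop:GH-dist-RD} to the earlier corollaries.
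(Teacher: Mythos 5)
Your proof is correct and follows exactly the route the paper intends: the corollary is stated there without proof as an immediate specialization of Corollary~\ref{cor:GH-dist-alpha-beta}, using the same observation that $d_m(X)=\diam X$ forces $\diam D=\diam X$ for every $D\in\cD_m(X)$, after which the infimum over $D$ of $\max\{\diam X,\,\l-\a(D)\}$ is evaluated by monotonicity in $\a(D)$. Nothing is missing; the case analysis in $\l$ is the same trivial comparison of the two terms in the maximum.
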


Now, consider the general case. It turns out that one can also obtain some exact formulas, but not for all $\l$. By $A$ we denote the intersection point of the graphs of the functions $\diam X-\l$, $\l\in[0,\diam X]$, and $\l-\a_m^-(X)$, $\l\ge\a_m^-(X)$. Consider all possible locations  of the point $A$ with respect to the horizontal strip $S$ between $d_m(X)$ and $d_m^+(X)$.

{\bf (1) The point $A$ lies below the strip $S$,} see  Figure~\ref{fig:general1}.

\ig{general1}{0.27}{fig:general1}{The graph of the function $g(\l)=2d_{GH}(\l\D,X)$, the point $A$ lies below the strip $S$.}

The vertical dashed lines partitions the figure into three parts: left, middle, and right. In the left and right parts the bold lines represent the exact values of the function $g$. In the middle part the bold line bounds a rhombus that contains all points $\bigl(\a(D),\diam D\bigr)$, thus the ``top and left parts'' of the rhombus boundary gives the upper bound for the function $g$, and the ``bottom and right parts'' gives the lower bound.

After calculation of the coordinates of the graphs intersection points, we get the following result.

\begin{cor}
Let $X$ be an arbitrary bounded metric space, $m=\#\l\D\le\#X$. Suppose that $\diam X-\a_m^-(X)\le2d_m(X)$, then
\begin{itemize}
\item if $\l\le\a_m^-(X)+d_m(X)$, then
$$
2d_{GH}(\l\D,X)=\max\bigl\{\diam X-\l,d_m(X)\bigr\};
$$
\item if $\a_m^-(X)+d_m(X)\le\l\le\a_m(X)+d_m^+(X)$, then
$$
\max\bigl\{\l-\a_m(X),d_m(X)\bigr\}\le2d_{GH}(\l\D,X)\le\min\bigl\{\l-\a_m^-(X),d_m^+(X)\bigr\};
$$
\item if $\l\ge\a_m(X)+d_m^+(X)$, then
$$
2d_{GH}(\l\D,X)=\l-\a_m(X).
$$
\end{itemize}
\end{cor}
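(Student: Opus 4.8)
The plan is to read everything off Corollary~\ref{cor:GH-dist-alpha-beta}, which gives $2d_{GH}(\l\D,X)=\inf_{D\in\cD_m(X)}F(D)$, where $F(D)=\max\bigl\{\diam D,\,\l-\a(D),\,\diam X-\l\bigr\}$. Two facts will be used throughout: (i)~the term $\diam X-\l$ does not depend on $D$; and (ii)~by the definitions of $d_m(X)$, $d_m^+(X)$, $\a_m^-(X)$, $\a_m(X)$ one has $d_m(X)\le\diam D\le d_m^+(X)$ and $\a_m^-(X)\le\a(D)\le\a_m(X)$ for \emph{every} $D\in\cD_m(X)$, while moreover there are partitions with $\diam D$ arbitrarily close to $d_m(X)$ and partitions with $\a(D)$ arbitrarily close to $\a_m(X)$. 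Lower bounds will come from the global inequalities in (ii) applied uniformly; upper bounds from choosing near-optimal partitions of two types.

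I would first dispose of the lower bounds, which are the same mechanism in all three cases. From $\diam D\ge d_m(X)$, $\l-\a(D)\ge\l-\a_m(X)$, and the constancy of $\diam X-\l$, taking the infimum over $D$ yields $2d_{GH}(\l\D,X)\ge\max\bigl\{d_m(X),\,\l-\a_m(X),\,\diam X-\l\bigr\}$. In the first case $\l\le\a_m^-(X)+d_m(X)$ forces $\l-\a_m(X)\le\l-\a_m^-(X)\le d_m(X)$, so the bound collapses to $\max\{d_m(X),\diam X-\l\}$; in the second case one simply keeps $\max\{d_m(X),\l-\a_m(X)\}$; in the third case one keeps $\l-\a_m(X)$. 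These are exactly the claimed lower estimates.

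For the upper bounds I would use two families of partitions. \emph{Type A}: pick $D_k$ with $\diam D_k\to d_m(X)$; since $\a(D_k)\ge\a_m^-(X)$ for all $k$, one gets $F(D_k)\le\max\{\diam D_k,\,\l-\a_m^-(X),\,\diam X-\l\}$, hence $2d_{GH}(\l\D,X)\le\max\{d_m(X),\,\l-\a_m^-(X),\,\diam X-\l\}$. \emph{Type B}: pick $D_k$ with $\a(D_k)\to\a_m(X)$; since $\diam D_k\le d_m^+(X)$ for all $k$, one gets $2d_{GH}(\l\D,X)\le\max\{d_m^+(X),\,\l-\a_m(X),\,\diam X-\l\}$. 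In case~1, Type~A together with $\l-\a_m^-(X)\le d_m(X)$ collapses the bound to $\max\{d_m(X),\diam X-\l\}$, matching the lower bound. In case~3, Type~B together with $d_m^+(X)\le\l-\a_m(X)$ (from $\l\ge\a_m(X)+d_m^+(X)$) collapses the bound to $\l-\a_m(X)$, matching the lower bound. In case~2, Type~A gives $2d_{GH}\le\l-\a_m^-(X)$ and Type~B gives $2d_{GH}\le d_m^+(X)$, hence $2d_{GH}\le\min\{\l-\a_m^-(X),d_m^+(X)\}$.

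The only step that needs genuine care — and the only place the standing hypothesis $\diam X-\a_m^-(X)\le2d_m(X)$ (``$A$ lies below the strip $S$'') is used — is checking, in each of the collapses above, that the immovable constant $\diam X-\l$ does not exceed the target, since it is the one term in $F(D)$ that cannot be driven down by choosing $D$. Concretely: in case~2 the inequality $\l\ge\a_m^-(X)+d_m(X)$ together with the hypothesis gives $\diam X-\l\le d_m(X)\le d_m^+(X)$ and, via $2\l\ge2\a_m^-(X)+2d_m(X)\ge\a_m^-(X)+\diam X$, also $\diam X-\l\le\l-\a_m^-(X)$; in case~3 the inequality $\l\ge\a_m(X)+d_m^+(X)\ge\a_m(X)+d_m(X)$ together with the hypothesis (and $\a_m^-(X)\le\a_m(X)$) gives $2\l\ge2\a_m(X)+2d_m(X)\ge\a_m(X)+\diam X$, i.e.\ $\diam X-\l\le\l-\a_m(X)$. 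These are one-line arithmetic verifications organized around the half-sum $\tfrac12\bigl(\diam X+\a_m^-(X)\bigr)$; I expect them to be the main (if minor) obstacle, everything else being routine bookkeeping with maxima. I note in passing that case~1 requires no hypothesis at all: there $\diam X-\l$ is already dominated by $d_m(X)$ outright.
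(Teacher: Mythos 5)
Your proposal is correct and follows essentially the same route as the paper: everything is read off Corollary~\ref{cor:GH-dist-alpha-beta} using the uniform bounds $d_m(X)\le\diam D\le d_m^+(X)$, $\a_m^-(X)\le\a(D)\le\a_m(X)$ together with near-optimal partitions, which is exactly the content of the paper's rhombus picture (Figure~\ref{fig:general1}); you have merely replaced the ``calculation of the coordinates of the graphs intersection points'' by explicit algebraic verifications. The arithmetic checks in cases~2 and~3 (including the observation that the first case needs no standing hypothesis) are all correct.
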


{\bf (2) The point $A$ belongs to the strip $S$}, see Figure~\ref{fig:general2}.

\ig{general2}{0.27}{fig:general2}{The graph of the function $g(\l)=2d_{GH}(\l\D,X)$, the point $A$ belongs to the strip $S$.}

Let $B$ be the intersection point of the graphs of the functions $\diam X-\l$ and $\l-\a_m(X)$.  Again, the vertical dashed lines partition the figure into three parts, and the marginal parts contain the graphs of exact values of the function $g$. In the middle part the bold line bounds a $5$-gone, that degenerates into a $4$-gon, when the point $B$ gets to the strip $S$: although all points $\bigl(\a(D),\diam D\bigr)$ belong to the rhombus obtained as intersection of the strip $S$ and the slope strip $T$ between the graphs of the functions $\l-\a_m^-(X)$ and $\l-\a_m(X)$, the value of the function $g$ cannot be less than $\diam X-\l$, and the graph of the latter function cuts from the rhombus the corresponding domain (a triangle, when $B$ belongs to the interior of the strip $S$, and $4$-gon otherwise). 

After calculation of the coordinates of the graphs intersection points, we get the following result.

\begin{cor}
Let $X$ be an arbitrary bounded metric space, $m=\#\l\D\le\#X$. Suppose that $2d_m(X)\le\diam X-\a_m^-(X)\le2d_m^+(X)$, then
\begin{itemize}
\item if $\l\le\frac12\bigl(\a_m^-(X)+\diam(X)\bigr)$, then
$$
2d_{GH}(\l\D,X)=\diam X-\l;
$$
\item if $\frac12\bigl(\a_m^-(X)+\diam(X)\bigr)\le\l\le\a_m(X)+d_m^+(X)$, then
$$
\max\bigl\{\diam X-\l,\l-\a_m(X),d_m(X)\bigr\}\le2d_{GH}(\l\D,X)\le\min\bigl\{\l-\a_m^-(X),d_m^+(X)\bigr\};
$$
\item if $\l\ge\a_m(X)+d_m^+(X)$, then
$$
2d_{GH}(\l\D,X)=\l-\a_m(X).
$$
\end{itemize}
\end{cor}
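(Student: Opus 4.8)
The plan is to start from Corollary~\ref{cor:GH-dist-alpha-beta}, which says
$$
2d_{GH}(\l\D,X)=\inf_{D\in\cD_m(X)}\max\bigl\{\diam D,\ \l-\a(D),\ \diam X-\l\bigr\}.
$$
Since the term $\diam X-\l$ does not depend on $D$, and $\inf_D\max\{\varphi(D),c\}=\max\bigl\{c,\ \inf_D\varphi(D)\bigr\}$ for any real constant $c$ (an elementary fact, verified by an $\e$-argument), I rewrite this as
$$
2d_{GH}(\l\D,X)=\max\bigl\{\diam X-\l,\ h(\l)\bigr\},\qquad h(\l):=\inf_{D\in\cD_m(X)}\max\bigl\{\diam D,\ \l-\a(D)\bigr\}.
$$
Everything then reduces to sandwiching the single function $h(\l)$, after which the hypothesis $2d_m(X)\le\diam X-\a_m^-(X)\le2d_m^+(X)$ (exactly the condition that the point $A$, of height $\tfrac12(\diam X-\a_m^-(X))$, lies in the strip $S$) is fed into a routine case analysis on $\l$.

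For the lower bound, every $D\in\cD_m(X)$ satisfies $\diam D\ge d_m(X)$ and $\a(D)\le\a_m(X)$ directly from the definitions of $d_m^-(X)=d_m(X)$ and $\a_m^+(X)=\a_m(X)$; hence $\max\{\diam D,\l-\a(D)\}\ge\max\{d_m(X),\l-\a_m(X)\}$, and taking the infimum gives $h(\l)\ge\max\{d_m(X),\l-\a_m(X)\}$. Therefore
$$
2d_{GH}(\l\D,X)\ge\max\bigl\{\diam X-\l,\ d_m(X),\ \l-\a_m(X)\bigr\},
$$
which is the claimed lower bound on the middle range and dominates $\diam X-\l$ (resp.\ $\l-\a_m(X)$) on the left (resp.\ right) range.

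For the upper bound I use two families of test partitions. For each $\e>0$ choose $D_1\in\cD_m(X)$ with $\a(D_1)>\a_m(X)-\e$ (possible since $\a_m(X)=\sup_D\a(D)$), noting $\diam D_1\le d_m^+(X)$ automatically; letting $\e\to0$ this gives $h(\l)\le\max\{d_m^+(X),\ \l-\a_m(X)\}$. Likewise choose $D_2$ with $\diam D_2<d_m(X)+\e$ (possible since $d_m(X)=\inf_D\diam D$), noting $\a(D_2)\ge\a_m^-(X)$ automatically; this gives $h(\l)\le\max\{d_m(X),\ \l-\a_m^-(X)\}$. Combining,
$$
2d_{GH}(\l\D,X)\le\max\Bigl\{\diam X-\l,\ \min\bigl\{\max\{d_m^+(X),\l-\a_m(X)\},\ \max\{d_m(X),\l-\a_m^-(X)\}\bigr\}\Bigr\}.
$$

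It remains to simplify these two bounds in the three ranges of $\l$ using the hypothesis. On the left range $\l\le\tfrac12(\a_m^-(X)+\diam X)$: the range condition gives $\l-\a_m^-(X)\le\diam X-\l$, and $2d_m(X)\le\diam X-\a_m^-(X)$ gives $d_m(X)\le\diam X-\l$, so the upper bound collapses to $\diam X-\l$, which also equals the lower bound there. On the right range $\l\ge\a_m(X)+d_m^+(X)$: the range condition gives $d_m^+(X)\le\l-\a_m(X)$ (hence also $d_m(X)\le\l-\a_m(X)$), while $2d_m^+(X)\ge\diam X-\a_m^-(X)\ge\diam X-\a_m(X)$ gives $\diam X-\l\le\l-\a_m(X)$, so both bounds collapse to $\l-\a_m(X)$. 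On the middle range $\tfrac12(\a_m^-(X)+\diam X)\le\l\le\a_m(X)+d_m^+(X)$: the right endpoint gives $\l-\a_m(X)\le d_m^+(X)$, the left endpoint together with the two hypothesis inequalities gives $\diam X-\l\le\l-\a_m^-(X)$, $\diam X-\l\le d_m^+(X)$ and $d_m(X)\le\l-\a_m^-(X)$, and the upper bound simplifies exactly to $\min\{\l-\a_m^-(X),d_m^+(X)\}$. The only real difficulty is bookkeeping: on the middle range there is no clean closed form, because the attainable pairs $\bigl(\a(D),\diam D\bigr)$ are only known to lie in the rectangle $[\a_m^-(X),\a_m(X)]\times[d_m(X),d_m^+(X)]$ rather than to fill it, so one must carefully check that each of the finitely many comparisons among $\diam X-\l$, $d_m(X)$, $d_m^+(X)$, $\l-\a_m^-(X)$, $\l-\a_m(X)$ needed in a given range really follows from the range bounds and the case-hypothesis; keeping Figure~\ref{fig:general2} in view makes this transparent.
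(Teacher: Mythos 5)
Your proof is correct and follows essentially the same route as the paper: starting from Corollary~\ref{cor:GH-dist-alpha-beta}, bounding the attainable pairs $\bigl(\a(D),\diam D\bigr)$ by the rectangle $[\a_m^-(X),\a_m(X)]\x[d_m(X),d_m^+(X)]$, using near-extremal partitions for the upper bound and the constant term $\diam X-\l$ throughout, which is exactly the figure-based argument the paper summarizes as ``calculation of the coordinates of the graphs intersection points.'' Your writeup merely makes that picture rigorous via the decomposition $2d_{GH}(\l\D,X)=\max\{\diam X-\l,\,h(\l)\}$ and the $\e$-arguments, and the case bookkeeping checks out.
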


{\bf (3) The point $A$ lies above the strip $S$.} There are two subcases, depending on location of the point $B$.

{\bf (3.1) The upper boundary line of the strip $S$ lies between the points $A$ and $B$}, see  Figure~\ref{fig:general3-1}.

\ig{general3-1}{0.28}{fig:general3-1}{The graph of the function $g(\l)=2d_{GH}(\l\D,X)$, the upper boundary line of the strip $S$ lies between the points $A$ and $B$.} 

In this case, the ``indeterminacy domain'' in the middle part of the figure forms a trapezium, if $B$ lies strictly below the strip $S$, and this trapezium degenerates into a triangle, providing $B$ belongs to this strip. 

After calculation of the coordinates of the graphs intersection points, we get the following result.

\begin{cor}
Let $X$ be an arbitrary bounded metric space, $m=\#\l\D\le\#X$. Suppose that
$\diam X-\a_m(X)\le2d_m^+(X)\le\diam X-\a_m^-(X)$, then
\begin{itemize}
\item if $\l\le\diam(X)-d_m^+(X)$, then
$$
2d_{GH}(\l\D,X)=\diam X-\l;
$$
\item if $\diam(X)-d_m^+(X)\le\l\le\a_m(X)+d_m^+(X)$, then
$$
\max\bigl\{\diam X-\l,\l-\a_m(X),d_m(X)\bigr\}\le 2d_{GH}(\l\D,X) \le d_m^+(X);
$$
\item if $\l\ge\a_m(X)+d_m^+(X)$, then
$$
2d_{GH}(\l\D,X)=\l-\a_m(X).
$$
\end{itemize}
\end{cor}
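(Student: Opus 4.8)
The starting point is Corollary~\ref{cor:GH-dist-alpha-beta}, which lets us write $2d_{GH}(\l\D,X)=g(\l)$, where
$$
g(\l):=\inf_{D\in\cD_m(X)}\max\bigl\{\diam D,\ \l-\a(D),\ \diam X-\l\bigr\}.
$$
The plan is to prove, for \emph{every} bounded metric space $X$ and every $\l>0$, the two ``universal'' bounds
$$
\max\bigl\{\diam X-\l,\ \l-\a_m(X),\ d_m(X)\bigr\}\le g(\l)\le\max\bigl\{\diam X-\l,\ \l-\a_m(X),\ d_m^+(X)\bigr\},
$$
and then to observe that, under the hypothesis $\diam X-\a_m(X)\le2d_m^+(X)\le\diam X-\a_m^-(X)$, the left- and right-hand sides coincide on the two outer $\l$-intervals and collapse to the asserted expressions on the middle one. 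In particular the whole statement will follow from a short computation with these two bounds, with no further appeal to the geometry of correspondences.

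First I would prove the lower bound. Fix $D\in\cD_m(X)$; the three entries under the maximum satisfy $\max\{\cdots\}\ge\diam D\ge d_m(X)$, $\max\{\cdots\}\ge\l-\a(D)\ge\l-\a_m(X)$ and $\max\{\cdots\}\ge\diam X-\l$. Passing to the infimum over $D$ preserves each inequality (here $\inf_D\diam D=d_m(X)$ and $\inf_D\bigl(\l-\a(D)\bigr)=\l-\sup_D\a(D)=\l-\a_m(X)$), and taking the maximum of the three resulting bounds gives the left inequality. Then, for the upper bound, I would fix $\e>0$, choose $D_\e\in\cD_m(X)$ with $\a(D_\e)>\a_m(X)-\e$ (possible since $\a_m(X)=\sup_D\a(D)$ and $\a_m(X)\le\diam X<\infty$), and use $\diam D_\e\le d_m^+(X)$ to get $g(\l)\le\max\{d_m^+(X),\ \l-\a_m(X)+\e,\ \diam X-\l\}$; letting $\e\to0^+$ yields the right inequality. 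One cannot, in general, exhibit a single optimal partition here, because $\a_m(X)$ and $d_m^+(X)$ are suprema that need not be attained; this is exactly why the $\e$-argument is used.

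Finally I would run the case analysis. Set $\l_B=\frac12\bigl(\diam X+\a_m(X)\bigr)$, the abscissa of the intersection point $B$ of the lines $y=\diam X-\l$ and $y=\l-\a_m(X)$. A short computation shows that $\diam X-\a_m(X)\le2d_m^+(X)$ is equivalent to $\diam X-d_m^+(X)\le\l_B\le\a_m(X)+d_m^+(X)$. Hence: if $\l\le\diam X-d_m^+(X)$, then $\diam X-\l\ge d_m^+(X)$ and $\l\le\l_B$ gives $\l-\a_m(X)\le\diam X-\l$, so both universal bounds equal $\diam X-\l$; if $\diam X-d_m^+(X)\le\l\le\a_m(X)+d_m^+(X)$, then $\diam X-\l\le d_m^+(X)$ and $\l-\a_m(X)\le d_m^+(X)$, so the upper bound is $d_m^+(X)$ while the lower bound is $\max\{\diam X-\l,\l-\a_m(X),d_m(X)\}$; if $\l\ge\a_m(X)+d_m^+(X)$, then $\l-\a_m(X)\ge d_m^+(X)$ and $\l\ge\l_B$ gives $\diam X-\l\le\l-\a_m(X)$, so both bounds equal $\l-\a_m(X)$. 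This is precisely the three-part conclusion, matching Figure~\ref{fig:general3-1}.

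I do not expect a genuine obstacle: once the two universal bounds are in place, the rest is bookkeeping. The two points that need a little care are (i) the $\e$-argument for the upper bound, forced by non-attainment of the relevant suprema, and (ii) keeping straight the role of the double hypothesis: only $\diam X-\a_m(X)\le2d_m^+(X)$ is actually used in the computation above, whereas the companion inequality $2d_m^+(X)\le\diam X-\a_m^-(X)$ merely places the configuration in Case~(3) and, equivalently, guarantees $\l-\a_m^-(X)\ge d_m^+(X)$ throughout the middle interval — which is why the upper bound there is simply $d_m^+(X)$ rather than $\min\{\l-\a_m^-(X),d_m^+(X)\}$ as in Case~(2).
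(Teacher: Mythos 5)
Your proposal is correct and follows essentially the same route as the paper: the paper derives this corollary from Corollary~\ref{cor:GH-dist-alpha-beta} by bounding $\diam D$ and $\a(D)$ through $d_m(X)$, $d_m^+(X)$, $\a_m(X)$ and reading off the intersection points of the graphs (Figure~\ref{fig:general3-1}), and your two ``universal'' bounds together with the $\e$-argument are just an analytic formalization of that picture. Your side remark that only the inequality $\diam X-\a_m(X)\le2d_m^+(X)$ is actually needed for the stated conclusion, the other hypothesis merely fixing the configuration of Case~(3.1), is accurate.
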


{\bf (3.2) The point $B$ lies above the strip $S$}, see Figure~\ref{fig:general3-2}.

\ig{general3-2}{0.28}{fig:general3-2}{The graph of the function $g(\l)=2d_{GH}(\l\D,X)$, the point $B$ lies above the strip $S$.}

In this case, the function $g$ can be calculated exactly  for all $\l$.

\begin{cor}
Let $X$ be an arbitrary bounded metric space, $m=\#\l\D\le\#X$. Suppose that $2d_m^+(X)\le\diam X-\a_m(X)$, then
$$
2d_{GH}(\l\D,X)=\max\bigl\{\diam X-\l,\l-\a_m(X)\bigr\}.
$$
\end{cor}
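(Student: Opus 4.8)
The plan is to derive the formula from Corollary~\ref{cor:GH-dist-alpha-beta}, which under the standing hypothesis $m=\#\l\D\le\#X$ expresses
$$
2d_{GH}(\l\D,X)=\inf_{D\in\cD_m(X)}\max\{\diam D,\,\l-\a(D),\,\diam X-\l\},
$$
and then to sandwich this infimum between $\max\{\diam X-\l,\,\l-\a_m(X)\}$ from both sides. The lower bound needs no hypothesis: for every $D\in\cD_m(X)$ one has $\a(D)\le\a_m(X)$ by the very definition $\a_m(X)=\sup_{D}\a(D)$, so
$$
\max\{\diam D,\,\l-\a(D),\,\diam X-\l\}\ \ge\ \max\{\l-\a(D),\,\diam X-\l\}\ \ge\ \max\{\l-\a_m(X),\,\diam X-\l\},
$$
and taking the infimum over $D$ gives ``$\ge$''.

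For the upper bound I would first record the elementary observation that $\l\mapsto\max\{\diam X-\l,\,\l-\a_m(X)\}$, being a maximum of a decreasing and an increasing affine function, has global minimum $\tfrac12\bigl(\diam X-\a_m(X)\bigr)$ attained at $\l=\tfrac12\bigl(\diam X+\a_m(X)\bigr)$; hence for \emph{every} $\l$
$$
\max\{\diam X-\l,\,\l-\a_m(X)\}\ \ge\ \tfrac12\bigl(\diam X-\a_m(X)\bigr)\ \ge\ d_m^+(X),
$$
the last inequality being exactly the assumption $2d_m^+(X)\le\diam X-\a_m(X)$. Since $\diam D\le d_m^+(X)$ for all $D\in\cD_m(X)$, the diameter term never exceeds the target value. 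Now, given $\e>0$, choose $D\in\cD_m(X)$ with $\a(D)>\a_m(X)-\e$ (possible because $\a_m(X)$ is a supremum over $\cD_m(X)$); for this $D$ each of $\diam D$, $\l-\a(D)$, $\diam X-\l$ is at most $\max\{\diam X-\l,\,\l-\a_m(X)\}+\e$, whence $2d_{GH}(\l\D,X)\le\max\{\diam X-\l,\,\l-\a_m(X)\}+\e$; letting $\e\to0$ completes the proof.

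I do not anticipate a real obstacle; the one point worth isolating is conceptual. A priori the infimum in Corollary~\ref{cor:GH-dist-alpha-beta} looks as though it might demand a single partition that simultaneously minimizes $\diam D$ and maximizes $\a(D)$, and no such partition need exist in general. The function of the hypothesis $2d_m^+(X)\le\diam X-\a_m(X)$ is precisely to decouple the two requirements: it makes the diameter term uniformly dominated by the target, so that only $\a(D)\approx\a_m(X)$ must be arranged, which is automatic from the definition of the supremum. (One should also note that $\a_m(X)\le\diam X<\infty$ for $m\ge2$, so all quantities in sight are finite; for $m=1$ the statement is vacuous, since then $\a_m(X)=+\infty$ and the hypothesis cannot be satisfied.)
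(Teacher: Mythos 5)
Your proof is correct and takes essentially the same route as the paper: both deduce the formula from Corollary~\ref{cor:GH-dist-alpha-beta}, the paper by its graphical case analysis (the point $B$ lying above the strip $S$), you by making that analysis explicit --- the hypothesis $2d_m^+(X)\le\diam X-\a_m(X)$ guarantees $\diam D\le d_m^+(X)\le\max\{\diam X-\l,\l-\a_m(X)\}$ for every partition, so only $\a(D)$ close to $\a_m(X)$ must be arranged via an $\e$-argument. Nothing is missing; your write-up simply supplies the analytic details that the paper leaves to the figure.
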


\end{document}